\newtheorem{teorema}{Theorem}[section]
\newtheorem{corollario}[teorema]{Corollary}
\theoremstyle{remark}
\begin{document}

\title{The hyperbola rectification from Maclaurin to Landen and the Lagrange-Legendre transformation for the elliptic integrals}
\author{Giovanni Mingari Scarpello,\,Daniele Ritelli and Aldo Scimone}
\date{}
\maketitle

\begin{abstract}
This article describes the main mathematical researches performed, in England and in the Continent between 1742-1827, on the subject of hyperbola rectification, thereby adding some of our contributions.
We start with the Maclaurin inventions on Calculus and their remarkable role in the early mid 1700s; next we focus a bit on his evaluation, 1742, of the \textit {hyperbolic excess}, explaining the true motivation behind his research. To his geometrical-analytical treatment we attach ours, a purely analytical alternative.
Our hyperbola inquiry is then switched to John Landen, an amateur mathematician, who probably was writing more to fix his priorities than to explain his remarkable findings. We follow him in the obscure proofs of his theorem on hyperbola rectification, explaining the links to Maclaurin and so on. With a chain of geometrical constructions, we attach our interpretation to Landen's treatment. Our modern analytical proof to his hyperbolic limit excess, by means of elliptic integrals of the first and second kind is also provided, and we demonstrate why the so called Landen transformation for the elliptic integrals cannot be ascribed to him. Next, the subject leaves England for the Continent: the character of Lagrange is introduced, even if our interest concerns only his 1785 memoir on irrational integrals, where the \textit {Arithmetic Geometric Mean}, AGM, is established by him. Nevertheless, our objective is not the AGM, but to detect the real source of the so-called Landen transformation for elliptic integrals. In fact, Lagrange's paper encloses a 
differential identity stemming from the AGM: integrating it, we show how it could be possible to arrive at the well-known Legendre recursive computation of a first kind elliptic integral, which appeared in his \textit{Trait\'{e}}, 1827, much after the Lagrange's paper. 

\end{abstract}

\section*{Introduction}
The reader should know we have been driven by three criteria, the first of which is to get him in touch with old Masters still capable of teaching many things, nowadays forgotten after 250 years, surmounting the main difficulties of language, notations, and often, of a completely different view of approaching questions.

The second criterion is our philological approach: thanks to the power of the Internet, we were able to keep close reference to the original prints of each work. As a matter of fact, for instance with John Landen, we realized how the difficult access to antique texts produced many mistakes and errors in the past due to inaccurate assertions being repeated without cross-checks, which then propagated over time.

The third step concerns our attached analytical research. Our Authors often write in involved, rather obscure ways and prove theorems in special cases only,  even if they have a greater generality. Thus we decided to work on some statements, not only for the sake of an independent check, but also to frame an old conquest within the context of a modern outline and language. In such a way we pass by means of modern mathematical objects (special functions, elliptic integrals, successive theorems) from philology to analytical inquiries, often through the computer algebra tools: modern tools and classical mind. By comparison of our computations to original results, we highlight the quality displayed at time by the Authors, who worked with plain old tools and relied solely on their brilliance and ingenuity. To Maclaurin's research we added some explanations and figures providing also a modern alternative formulation to the hyperbolic excess.
 Landen's writing is usually obscure, and his theorems are proved in special cases only, so for his excess computations, we provide alternative proofs. 
For Lagrange we show how his AGM transformation can lead to the  famous modular transformation for the elliptic integrals, mistakenly attributed to Landen. 

\section{Maclaurin} 

Maclaurin's name remains in the history of science thanks to the first textbook dealing with Newtonian Calculus: his \textit{Treatise of Fluxions}, Edinburgh, 1742. In it Maclaurin tried to present the Calculus with the \lq\lq rigour of the ancients'' supporting Newton in the polemics on Calculus fundaments' tenability raised in \textit {The Analyst, a discourse addressed to an Infidel Mathematician} \cite{Berk} by George Berkeley (1685-1753), who had criticized the foundations of the analysis in the works by Newton.\footnote{The \lq\lq infidel mathematician'' is believed to have been (perhaps) the astronomer  E. Halley, responsible for financing in 1687 the print of \textit{Principia} by Newton.}

\subsection{Maclaurin's  works and his \textit{Fluxions}}
Colin Maclaurin (1698-1746) lived and worked in a remarkable scientific context. His name is commonly associated with the \textit{Maclaurin series} $ f(x)=f(0)+f'(0)x + 1/2f''(0)x^{2} +\cdots$ despite the fact that it had been published by the Englishman Brook Taylor (1685-1731) in its more general form (Taylor series) already in 1715, and James Gregory (1638-1675) had used it in special cases. Maclaurin's papers on journals can be divided as issued in the Philosophical Transactions (9 articles between 1718 and 1743 concerning curves, equations, and \ldots \textit{Cells wherein the Bees deposit their Honey}).
A second group has been added to these writings, with two further publications about astronomy and in the Physical and Literary Society, Edinburgh, Vol. I.

Maclaurin's books covered each branch of Mathematics: \textit{Geometria Organica} London, 1720; \textit{A Treatise of Fluxions}, 2 volumes, Edinburgh, 1742; \textit{A Treatise of Algebra}, with an Appendix, \textit{De Linearum Geometricarum Proprietatibus generalibus}, 1748\footnote{A milestone theorem of the planar cubics theory (probably already known to Gua de Malves, 1740), is held in this Appendix:  if a straight line meets 2 real inflection points of a cubic, it will cross it again in a third real inflection point.}; {\it An Account of Sir Isaac Newton's Philosophy} (1748).
 In his \textit{Geometria} Maclaurin dealt with conics, cubics, quartics, and general properties of curves,
such as the famous \textit{trisectrix}, he met while studying the ancient problem of the angle's trisection and whose equation in cartesian coordinates is  $y^{2}=(3+x)x^{2}/({1-x}).$ The treatise \textit{Fluxions}, \cite{Fluxions}, had its origin in the Maclaurin's defense of the Newtonian doctrine of fluxions, expanding, well beyond a pamphlet, to more than 760 pages: a very complex construction where most of the time mathematics is presented, organized and applied to several physics problems. It therefore will not be described here, and we will stick only to a very superficial discussion of it. The reader can have a satisfactory idea of its internal partitions and contents by referring to Sageng's pages, \cite{Sageng}.

The n. 927 of  \textit{Fluxions} entitled: \textit {The construction of the elastic curve, and of other figures, by the rectification of the conic sections}, starts with:
\begin{quote}
The celebrated author who first resolved this as well as several other curious problems, after his account of this figure (which is commonly called the clastic curve), 
adds: \lq\lq Ob graves causas suspicor curvae nostrae constructionem a nullius sectionis conicae seu quadratura seu rectificatione pendere'' Act. Lips. 1694, page 272. But it is constructed by the rectification of equilateral hyperbola.
\end{quote}
The unmentioned author is Jakob Bernoulli, and such a motivation is by Maclaurin postponed to the theoretical treatment 
of hyperbola rectification, n. 805. We will concentrate on this treatment, and proceed to provide more elements on its employ in the elastica problem.

Maclaurin's extensive interest in almost all Mathematical Physics and Calculus of his time, led him to the problem of fluents, and, not only to solve the elastica, but to rectify the curves as well.
 In \textit{Fluxions}, n. 755, Maclaurin defines a research program concerning the classification of irrational fluents, to be followed by D'Alembert in his \textit{Recherches sur le calcul integral}, 1746, published in 1748 by the Berlin Academy, see \cite{Dal}. Whilst D'Alembert used a purely analytical approach without any figure, doing only algebraic computations by means of several changes of variable, Maclaurin, on the contrary performed the integration of irrational differentials by means of arcs of conics and often with the help of geometrical arguments. In such a way the influence of Maclaurin induced D'Alembert to study by means of an algebraic process (\textit{Recherches}, page 203) the fluent of
$$ 
{\rm d}x\sqrt{\frac{x}{x^{2}\pm fx+b^{2}}}, 
$$
establishing that it can be reduced to the addition of an arc of hyperbola of certain semiaxes plus an algebraic term.
At n. 798 the Maclaurin classification is a bit more strict: let us give an account of it.

\noindent First class: when a fluent can be represented in a finite number of algebraic terms, like the fluent of 
$$ 
\frac{{\rm d}x}{\sqrt{1\pm x}}.
$$
Second class: includes fluents like 
 $$ 
\frac{{\rm d}x}{\sqrt{1\pm x^{2}}},
$$
which can be reduced to areas of a circle and the hyperbolic areas of logarithms: they cannot be assigned in algebraic terms, but have been computed by several methods

Third class: fluents like
$$\sqrt{\frac{x}{1\pm x^{2}}}\,{\rm d}x\quad\text{or}\quad\frac{{\rm d}x}{\sqrt{x}\sqrt{1\pm x}}$$
which cannot be reduced to any form and are required in some useful problems. They can only be assigned by hyperbolic and elliptic arcs; namely the computation of the length of a hyperbola leads to fluents of this type. 
Maclaurin realized that the elastica analysis could lead to the same (hopeless) integrals met when trying to rectify the conical sections. In such a way he judged  the problem as solved, since its solution is given by a known and traceable curve; the practical computation was accomplished by expanding the function and performing a termwise integration.

\subsection{Inside the \textit{Fluxions}: the hyperbola and its excess}
 Before entering Maclaurin's topic of our interest, we need to present shortly some definitions which precede a couple of theorems due to Apollonius, and quoted by Maclaurin himself.
\subsubsection{Apollonius' theorems on hyperbola}
First of all, some terms will be recalled to be used later. Let us consider  the hyperbola $AP\ldots$, and $A'P'\ldots$ of equation
\begin{equation}
\frac{x^{2}}{a^{2}}-\frac{y^{2}}{b^{2}}=1. \label{ipxya}
\end{equation}
$a$ and $b$ being the \textit{semiaxes}; the focal axis FF', with the focal distance given by $\sqrt{a^{2}+b^{2}}$, named \textit{transverse} is assumed as $Ox$, while $Oy$ is named \textit{not transverse}, or \textit{imaginary}; the hyperbola asymptotes are given by $y=\pm\,(b/a)\,x$. We will take into account also the hyperbola:
\begin {equation}
\frac{y^{2}}{b^{2}}-\frac{x^{2}}{a^{2}}=1 \label{ipyx}
\end {equation}
having the same asymptotes and focal distance as \eqref{ipxya}, but exchanging transverse and not transverse axes. The hyperbol\ae\, of equations \eqref{ipxya} and \eqref{ipyx} are named \textit{conjugate}, and a whichever straight line passing through their common center O is said to be the \textit{diameter} for both. Two diameters (for instance $PP'$ and $P_{1}P_{1}'$, see \figurename~\ref{f01}) are said to be \textit{conjugate} whenever the tangents to hyperbola at each extreme  of one of them are parallel to the other one.
\begin{figure}[th]
\begin{center}
\scalebox{0.6}{\includegraphics{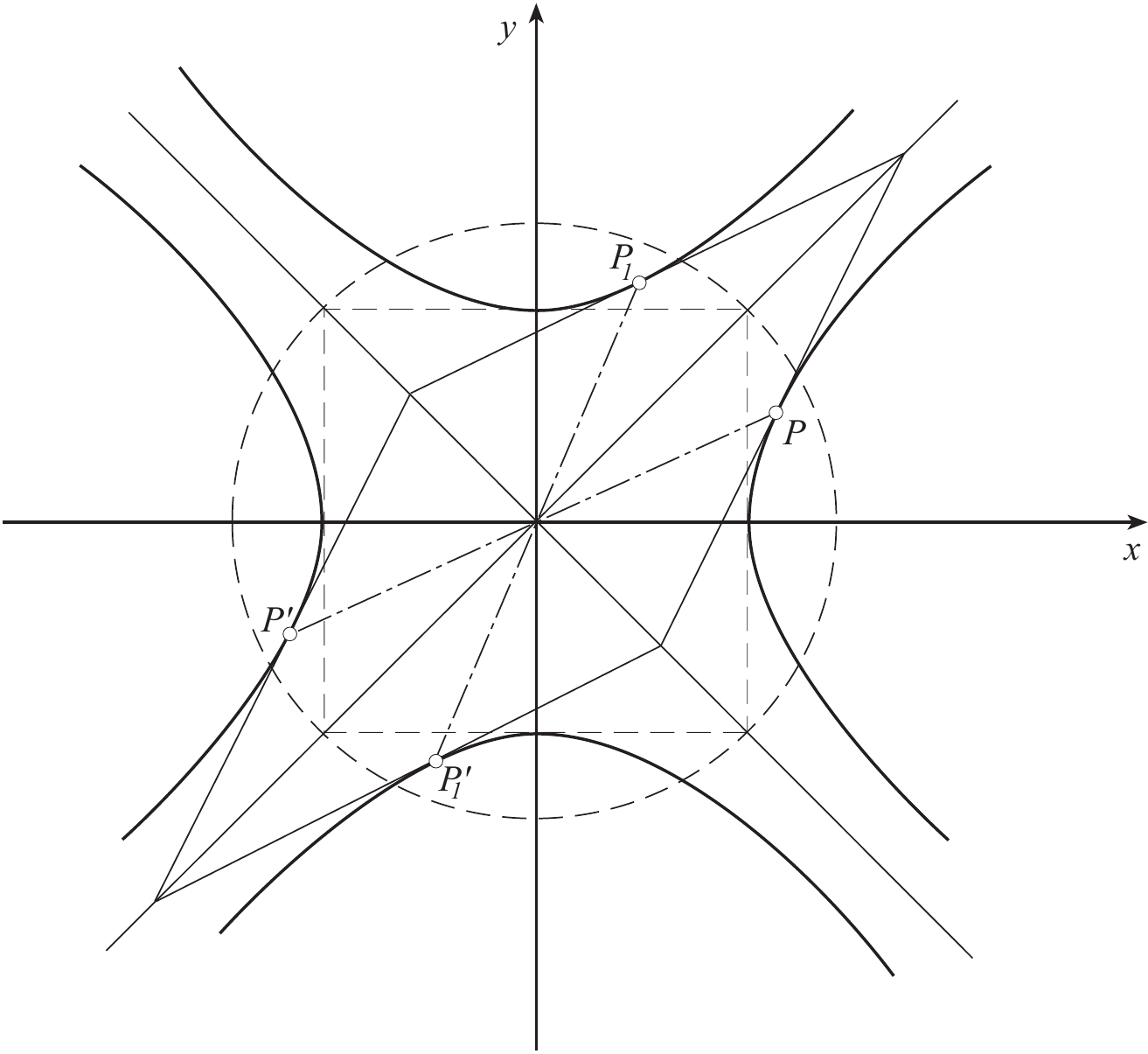}}\end{center}
\caption{The parallelogram relevant to a couple of conjugate hyperbol\ae}\label{f01}
\end{figure}
 In such a way the four straight lines touching a couple of conjugate diameters at the extremes define a parallelogram (marked as  $R_{1}WLG$ in \figurename~\ref{f02}) which circumscribes the couple of conjugate hyperbol\ae.

Now let us introduce Apollonius' theorems.
\begin{teorema}
For any hyperbola the absolute value of the difference of the squares of any couple of conjugate diameters has a constant value, given by $\vert{a^{2}-b^{2}}\vert.$
\end{teorema}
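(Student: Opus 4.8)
The plan is to realize a couple of conjugate diameters through an explicit hyperbolic parametrization, and then to reduce the whole claim to the single identity $\cosh^{2}\theta - \sinh^{2}\theta = 1$. First I would write a point of the hyperbola \eqref{ipxya} as $P = (a\cosh\theta,\,b\sinh\theta)$, so that the half-diameter $OP$ it determines satisfies $|OP|^{2} = a^{2}\cosh^{2}\theta + b^{2}\sinh^{2}\theta$, with $P' = -P$ the opposite extreme. This captures an \emph{arbitrary} diameter of the curve by letting $\theta$ vary.

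Next I would locate the conjugate diameter. Differentiating the parametrization gives the tangent direction at $P$ as $(a\sinh\theta,\,b\cosh\theta)$; by the Apollonian definition recalled above, the conjugate diameter is the one through $O$ parallel to this tangent. I would then verify that the point $P_{1} = (a\sinh\theta,\,b\cosh\theta)$ lies on the conjugate hyperbola \eqref{ipyx}, which is immediate since $\frac{b^{2}\cosh^{2}\theta}{b^{2}} - \frac{a^{2}\sinh^{2}\theta}{a^{2}} = \cosh^{2}\theta - \sinh^{2}\theta = 1$. Hence $P_{1}$ and its opposite $P_{1}'$ are precisely the extremes of the conjugate diameter, and $|OP_{1}|^{2} = a^{2}\sinh^{2}\theta + b^{2}\cosh^{2}\theta$.

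The conclusion is then a one-line computation,
$$|OP|^{2} - |OP_{1}|^{2} = (a^{2}-b^{2})\bigl(\cosh^{2}\theta - \sinh^{2}\theta\bigr) = a^{2} - b^{2},$$
which is independent of $\theta$; taking absolute values yields the asserted constant $|a^{2}-b^{2}|$ for every conjugate pair.

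The main obstacle—really the only subtle point—is the correct identification of the conjugate diameter's endpoints: one must check that they lie on the conjugate hyperbola \eqref{ipyx} and not on \eqref{ipxya}, which is exactly what the definition via parallel tangents forces, and this is what interchanges the roles of $\cosh$ and $\sinh$ and so flips the sign of the $b^{2}$ contribution. The ordering of the two squares depends on whether $a>b$ or $a<b$, but this is harmless, being absorbed by the absolute value. I would also note, as a purely algebraic alternative avoiding the parametrization, that writing the two diameters as $y = m_{1}x$ and $y = m_{2}x$ the conjugacy relation for \eqref{ipxya} reads $m_{1}m_{2} = b^{2}/a^{2}$, from which the same invariant drops out after intersecting each line with the appropriate one of the two conjugate hyperbolae.
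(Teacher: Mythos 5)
Your proof is correct, but it cannot be ``the same approach as the paper'' for a simple reason: the paper never proves this statement at all. It is quoted as a classical theorem of Apollonius, and the only proof the paper supplies in that section is for the corollary of the \emph{second} Apollonius theorem, via an area-equivalence argument on circumscribed parallelograms. Your hyperbolic-parametrization argument is therefore a genuine addition, and it is sound: $P=(a\cosh\theta,b\sinh\theta)$ sweeps out every diameter that actually meets \eqref{ipxya}, the candidate conjugate extreme $P_{1}=(a\sinh\theta,b\cosh\theta)$ is correctly shown to lie on \eqref{ipyx}, and the cancellation $(a^{2}-b^{2})(\cosh^{2}\theta-\sinh^{2}\theta)$ gives the invariant. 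Three remarks. First, your reading of ``diameters'' as semi-diameters (center-to-curve distances) is the right one and matches how the paper itself later applies the theorem in \eqref{obel}, where $\overline{SE}^{2}-\overline{SH}^{2}=a^{2}-b^{2}$ with $\overline{SE}$, $\overline{SH}$ conjugate semi-diameters; with full diameters the constant would be $4\vert a^{2}-b^{2}\vert$. Second, your construction gives the mutuality of conjugacy for free, though you leave it implicit: the tangent to \eqref{ipyx} at $P_{1}$ has direction $(a\cosh\theta,b\sinh\theta)$, i.e.\ is parallel to $OP$, so the pair is conjugate in the symmetric sense of the paper's definition, not just in one direction. Third, since in any conjugate pair exactly one member meets \eqref{ipxya} (the other, being steeper than the asymptotes, meets \eqref{ipyx}), letting $\theta$ range over that member shows your parametrization exhausts all conjugate pairs, so no generality is lost.
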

\begin{teorema}
The area of a parallelogram circumscribing two conjugate hyperbol\ae\, of equations \eqref{ipxya}, \eqref{ipyx}, having  the asymptotes as diagonals, is constant. Its value is given by $4ab$, namely the area of a rectangle whose sides are twice the semiaxes.
\end{teorema}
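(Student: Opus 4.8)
The plan is to parametrize a pair of conjugate semi-diameters explicitly and then obtain the parallelogram area from a single cross product. First I would write a generic point on the hyperbola \eqref{ipxya} as $P=(a\cosh t,\,b\sinh t)$, so that the semi-diameter $OP$ has this as its free endpoint. Differentiating \eqref{ipxya} implicitly gives the tangent slope at $P$ equal to $b\cosh t/(a\sinh t)$, whence the tangent direction is $(a\sinh t,\,b\cosh t)$. By the definition of conjugate diameters recalled above, the conjugate semi-diameter must be parallel to this tangent, i.e.\ it lies along the direction $(a\sinh t,\,b\cosh t)$.

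Next I would locate where this conjugate direction meets the conjugate hyperbola \eqref{ipyx}. Substituting the ray $(\lambda a\sinh t,\,\lambda b\cosh t)$ into \eqref{ipyx} and using the identity $\cosh^2 t-\sinh^2 t=1$ yields $\lambda^2=1$, so the conjugate semi-diameter endpoint is $P_1=(a\sinh t,\,b\cosh t)$, a point of \eqref{ipyx}. Thus a pair of conjugate semi-diameters is encoded by the two vectors $\mathbf u=(a\cosh t,\,b\sinh t)$ and $\mathbf v=(a\sinh t,\,b\cosh t)$. Since the tangent at $P$ is parallel to $\mathbf v$ and the tangent at $P_1$ is parallel to $\mathbf u$, the four tangents at $P,P',P_1,P_1'$ meet at the four vertices $\pm\mathbf u\pm\mathbf v$, forming a parallelogram centred at $O$ whose adjacent sides are the vectors $2\mathbf u$ and $2\mathbf v$. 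Its area is therefore
\begin{equation*}
4\,\lvert\mathbf u\times\mathbf v\rvert=4\,\lvert a\cosh t\cdot b\cosh t-b\sinh t\cdot a\sinh t\rvert=4ab\,(\cosh^2 t-\sinh^2 t)=4ab,
\end{equation*}
independent of $t$, which is exactly the claimed constant value.

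As a final check I would verify the hypothesis on the diagonals: the diagonal through $\pm(\mathbf u+\mathbf v)$ points along $(ae^{t},\,be^{t})\parallel(a,b)$, while the diagonal through $\pm(\mathbf v-\mathbf u)$ points along $(-ae^{-t},\,be^{-t})\parallel(-a,b)$, so the two diagonals indeed lie on the asymptotes $y=\pm(b/a)x$, confirming that the construction satisfies the stated geometric condition automatically.

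The computation is routine once the parametrization is fixed; the only genuine step is the correct identification of the conjugate endpoint $P_1$, which rests on the definition of conjugate diameters together with the identity $\cosh^2 t-\sinh^2 t=1$. I expect this identification, and the bookkeeping of the parallelogram vertices as $\pm\mathbf u\pm\mathbf v$, to be the main point to get right; everything else collapses into the single cross-product evaluation displayed above.
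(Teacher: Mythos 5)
Your proof is correct, but note that the paper itself never proves this statement: it is quoted as one of Apollonius' classical theorems (the paper only proves the \emph{corollary} that follows it, taking the theorem as given), so your argument is a genuine supplement rather than a parallel to an existing proof. What you supply is a modern analytic verification: the hyperbolic parametrization $\mathbf u=(a\cosh t,\,b\sinh t)$, the identification of the conjugate semi-diameter endpoint $\mathbf v=(a\sinh t,\,b\cosh t)$ on the conjugate hyperbola via $\cosh^2t-\sinh^2t=1$, the vertices $\pm\mathbf u\pm\mathbf v$, and the cross product $\lvert\mathbf u\times\mathbf v\rvert=ab$ giving the constant area $4ab$. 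All steps check out, including the two that are easiest to get wrong: the tangent at $P_1$ to the \emph{conjugate} hyperbola is indeed parallel to $\mathbf u$ (so the pair is genuinely conjugate in the paper's sense, not just one-sidedly), and the diagonals $\mathbf u+\mathbf v\parallel(a,b)$ and $\mathbf v-\mathbf u\parallel(-a,b)$ do lie on the asymptotes, so the parallelogram you construct is exactly the one in the statement. Your parametrization also covers all admissible configurations, since every diameter meeting the hyperbola \eqref{ipxya} passes through a point $(\pm a\cosh t,\,b\sinh t)$. The trade-off between the two treatments is the usual one: the paper's appeal to Apollonius keeps the exposition in the synthetic spirit of Maclaurin's own argument, while your computation makes the constancy of the area transparent (it is literally the identity $\cosh^2t-\sinh^2t=1$) at the cost of introducing coordinates and a parametrization foreign to the eighteenth-century setting.
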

We make a special use of a corollary of this second theorem, corollary implicitly assumed by Maclaurin in his work on rectification. Minding \figurename~\ref{f03} we have:
\begin{corollario}
Given a couple of conjugate hyperbol\ae\,  of center S, for whichever semidiameter $SH$, the product of its length to the distance $\overline{SP}=p$ is then constant, being P  the  point where the perpendicular drawn from $S$ crosses the  tangent to the hyperbola parallel to $SH$:
\begin{equation}
\overline{SH}\times p=ab \label{scorollary}
\end{equation}
\end{corollario}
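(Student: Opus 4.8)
The plan is to read \eqref{scorollary} as a direct consequence of the second Apollonius theorem above, by recognizing the product $\overline{SH}\cdot p$ as the area of one quarter of the circumscribing parallelogram. First I would fix the semidiameter $SH$ and introduce the conjugate semidiameter $SK$, whose endpoint $K$ lies on the companion conjugate hyperbola; by the very definition of conjugate diameters, the tangent drawn parallel to $SH$ is precisely the tangent to the curve at $K$, so that $P$ is the foot of the perpendicular dropped from $S$ onto this tangent.

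Next I would build the parallelogram bounded by the four tangents at $\pm H$ and $\pm K$, whose area equals $4ab$ by the second theorem. The two diameters $HH'$ and $KK'$ bisect its sides and split it into four congruent cells, each spanned by the vectors $\vec{SH}$ and $\vec{SK}$ and hence of area $ab$. Evaluating the area of one such cell with $SH$ as base (of length $\overline{SH}$), its opposite side lies along the tangent at $K$, which is parallel to $SH$; since $S$ itself lies on the base line, the corresponding height is just the perpendicular distance from $S$ to that tangent, namely $p$. Therefore $\overline{SH}\cdot p=ab$, which is \eqref{scorollary}.

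The delicate point, and the step I expect to cost the most care, is justifying that the height of the cell relative to the base $SH$ coincides with $p$: this rests on the parallelism of the tangent at $K$ to $SH$ together with the fact that $S$ sits on the base line, so that the gap between the two parallel lines is measured exactly by the perpendicular $\overline{SP}$. One must also verify the statement uniformly whether $H$ is taken on \eqref{ipxya} or on its conjugate \eqref{ipyx}.

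As a purely analytical alternative, in the spirit adopted elsewhere in this paper, I would parametrize $H=(a\cosh t,\,b\sinh t)$ on \eqref{ipxya} and its conjugate endpoint $K=(a\sinh t,\,b\cosh t)$ on \eqref{ipyx}, so that the tangent at $K$ has direction $\vec{SH}$. The distance from the center $S$ to this line is $p=\lvert\,\vec{SK}\times\vec{SH}\,\rvert/\lvert\vec{SH}\rvert$, and a one-line computation gives $\vec{SK}\times\vec{SH}=ab(\sinh^{2}t-\cosh^{2}t)=-ab$. Since $\lvert\vec{SH}\rvert=\overline{SH}$, this yields $p=ab/\overline{SH}$, that is $\overline{SH}\cdot p=ab$, the identity \eqref{scorollary} once more, with the constancy made transparent by $\cosh^{2}t-\sinh^{2}t=1$.
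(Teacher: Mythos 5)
Your synthetic argument is correct and is essentially the paper's own proof: both read $\overline{SH}\cdot p$ as the area of one quarter-cell of the parallelogram circumscribing the conjugate hyperbol\ae\ (that cell having area $ab$ by the second Apollonius theorem), evaluated with the semidiameter as base and the perpendicular distance $p$ from the center to the parallel tangent as altitude. The hyperbolic parametrization you append is a correct additional verification not present in the paper, but the core route coincides.
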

\begin{proof}
We refer to \figurename~\ref{f02}. 
\begin{figure}[H]
\begin{center}
\scalebox{0.52}{\includegraphics{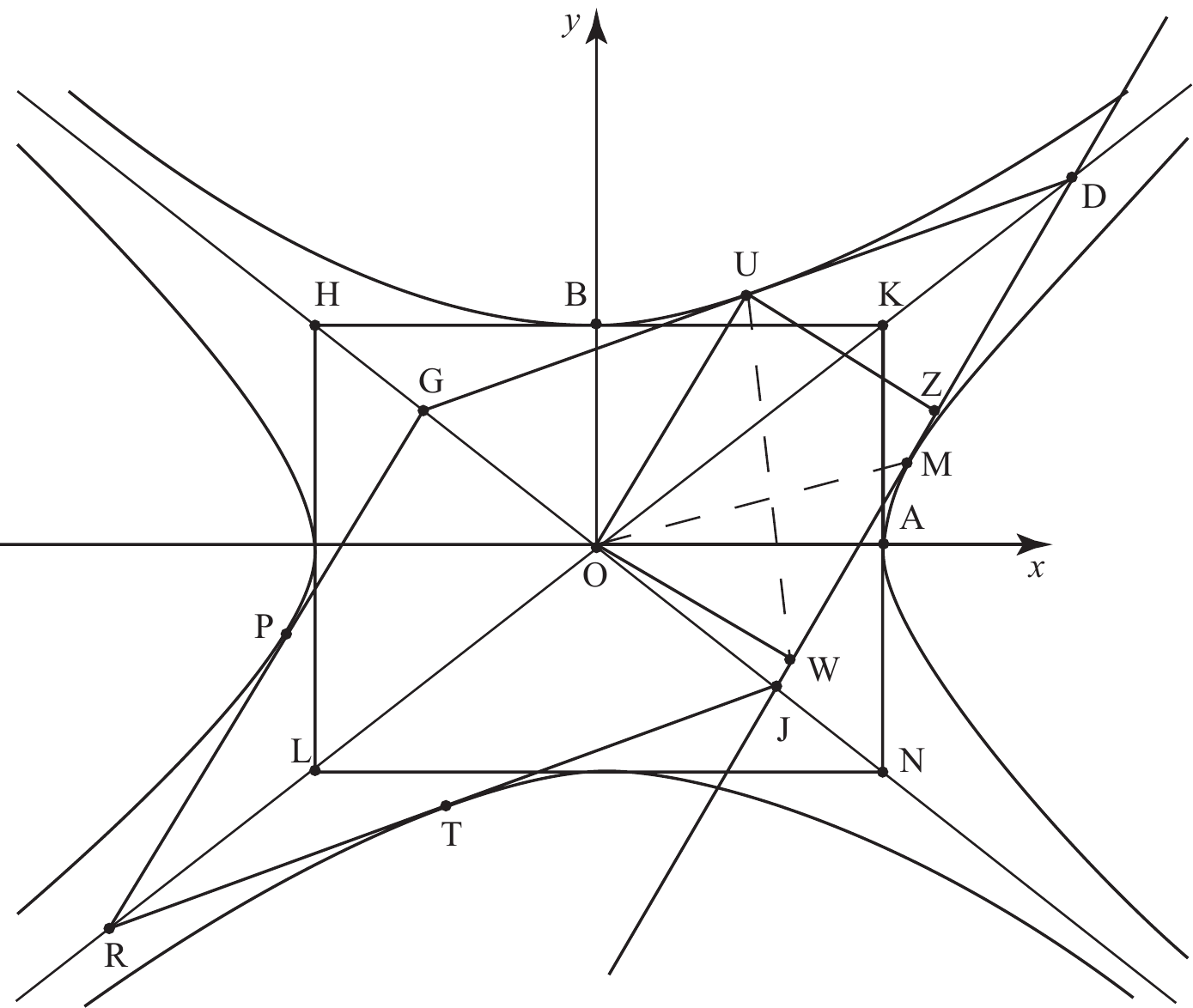}}
\end{center}
\caption{The geometric elements $(a,\, b,\, r,\, p)$ of a couple of conjugate hyperbol\ae}\label{f02}
\end{figure}
Using the second Apollonious' theorem we have ${\rm area} (RGDJ)={\rm area} (LNKH)=4ab,$ where $a$ and $b$ are the hyperbola semiaxses. This implies
$$
\displaystyle{\frac{1}{4} {\rm area} (RGDJ) = \frac{1}{4} {\rm area} (OMDU)={\rm area} (OAKB)=ab}
$$
being M midpoint of JD and U midpoint of GD. Now observe that rectangle OWZU, where OW (the Maclaurin's $p$ segment) perpendicular to the tangent line to the hyperbola in Z and OU conjugate semidiameter to the hyperbola, is equivalent to parallelogram OMDU, since they have the common base OU and the same altitude OW. Thus we have
\[
{\rm area} (OWZU)={\rm area} (OMDU)=ab.
\]
This follows from the equality
\[
{\rm area} (OWZU)=OU\times p.
\]
\end{proof}

\subsection{The hyperbola's \textit{pedal} equation}
In a cartesian orthogonal reference frame let us consider in the first quadrant the branch AQE of the hyperbola \eqref{ipxya} of \textit{eccentricity} $e=({a^{2}+b^{2}}/{a^{2}})^{1/2}$ and its conjugate \eqref{ipyx}: such branches, see \figurename~\ref{f03}, cut the axes at the points $A(a, 0)$ and $B(0, b)$ respectively. Let S be the common centre of both hyperbol\ae\, where we put our origin. On the branch AQE we consider a point E marked by the radius $\overline{SE}=r$, and draw to such a branch at E the tangent straight line $\tau\tau$, crossed at P by the straight line $nn$ through S perpendicular to it; we put\footnote{Notice that $r$ and $p$ are called \textit{pedal coordinates} of the hyperbola with respect to S. The name is coming from the \textit{pedal curve} of a curve with respect to a fixed (pedal) point, namely the locus of the points where the successive perpendiculars through that point cross the successive tangents to the curve.}: $\overline{SP}=p$. It is assumed, for instance, $a>b$; then, as a consequence of the first Apollonius theorem we get:
\begin{equation}\label{obel}
\overline{SE}^{2}-\overline{SH}^{2}=a^{2}-b^{2}=2a\varepsilon,  
\end{equation}
where
\begin{equation}
\varepsilon=a\left(1-\frac{e^{2}}{2}\right) \label{lengeps}
\end{equation}
is a convenient length, and its \textit{eccentricity} is $e.$ Putting $\overline{SE}=r$ in \eqref{obel} we get:
$
\overline{SH}^{2}=r^{2}-2a\varepsilon. 
$
\begin{figure}[ht]
\begin{center}
\scalebox{0.6}{\includegraphics{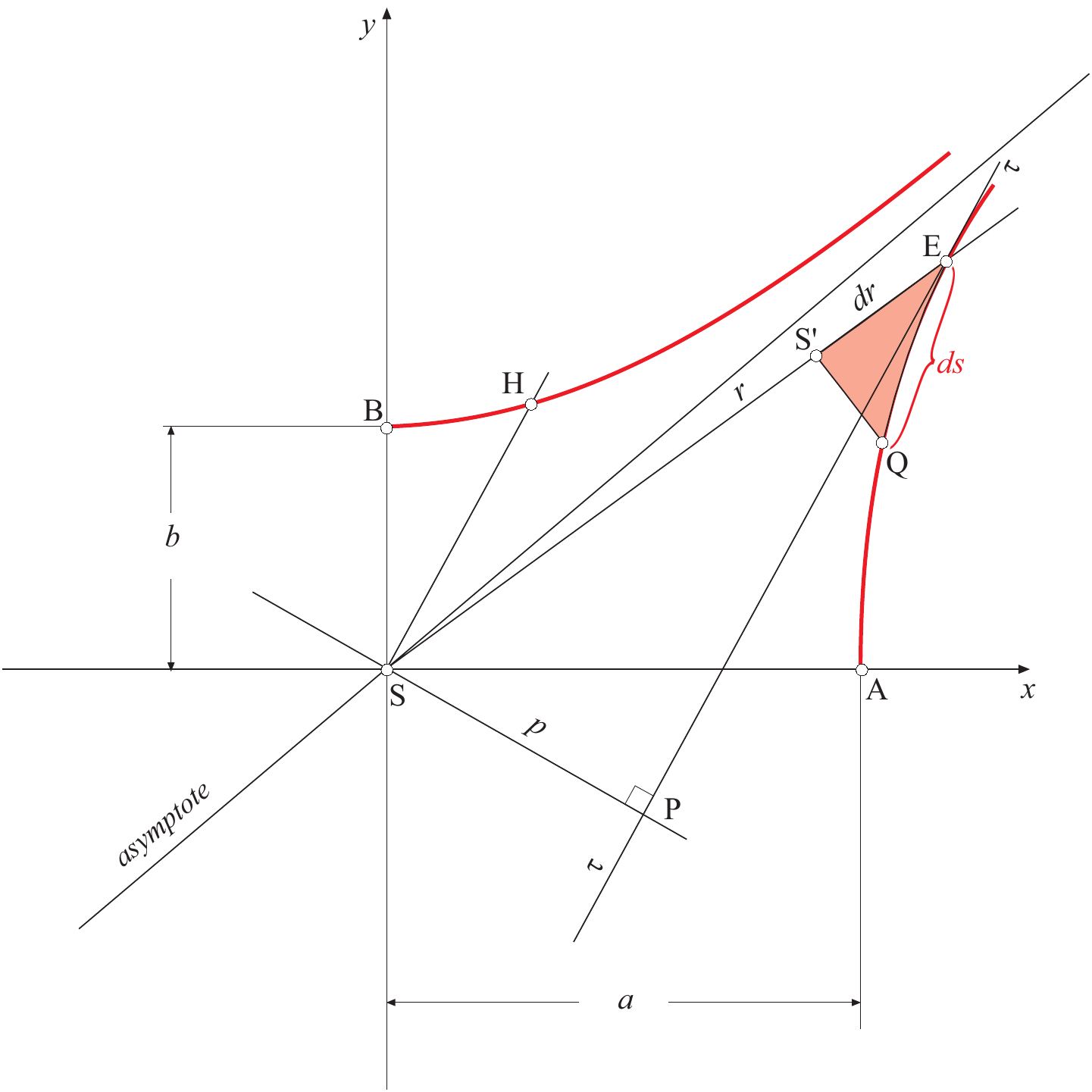}}
\end{center}
\caption{Pedal elements of a couple of conjugate hyperbol\ae}\label{f03}
\end{figure}
On the other side, the Apollonius corollary, formula \eqref{scorollary}
provides $\overline{SH}^{2}=a^{2}b^{2}/p^{2}.$  By comparison of $\overline{SH}^{2}$ expressions, Maclaurin gets:
\begin{equation}
r^{2}-2a\varepsilon=aX,\label{rho}
\end{equation}
where it has been defined\footnote{Maclaurin used $x$, but in order to avoid mistakes with the next sections, we prefer the capital letter $X$, whilst $x$ is kept for the abscissa.}:
\begin{equation}
X=\frac{ab^{2}}{p^{2}}.\label{ics}
\end{equation}
Solving to the radius $r$, taking its derivative with respect to $X$, one obtains:
\begin{equation}
{\rm d}r=\frac{a}{2\sqrt{2a\varepsilon+aX}}{\rm d}X. 
\end{equation}
In \figurename~\ref{f03}, however, considering the similar triangles $\triangle{\rm E}{\rm Q}{\rm S}$ and $\triangle{\rm E}{\rm P}{\rm S}$,
by equating the ratios of the hypotenuse to the greater cathetus, we have:
\begin{equation}
{\rm d}s=\frac{r}{\sqrt{r^{2}-p^{2}}}\,{\rm d}r.
\end{equation}
The infinitesimal arclength ${\rm d}s$ is then obtained with the hyperbola $(a,b)$ as a function of  $p$ and $r$. Plugging there the expressions \eqref{rho} and \eqref{ics} depending on  $X$ alone, Maclaurin gets:
\begin{equation*}
{\rm d}s=\frac{\sqrt{a}}{2}\frac{\sqrt{X}}{\sqrt{X^{2}+2\varepsilon X-b^{2}}}\,{\rm d}X
\end{equation*}
where  $X$ is variable with the point E and then with $p$; three fixed hyperbola parameters appear, i.e.: the semiaxes $a$ and $b$ and the arclength $\varepsilon$, whose two are independent.
At this point Maclaurin introduces the \textit{length of the tangent}, namely the segment bounded by $E$ and $P$:
$
\overline{EP}=\sqrt{r^{2}-p^{2}}.
$
Putting there the expressions of $r$ and $p$ as functions of $X$, we get $\overline{EP}$ as a function of $X$ alone, so that, taking the differential, one finds:
$$
{\rm d}(\overline{EP})=\frac{\sqrt{a}}{2x\sqrt{X}}\frac{X^2+b^2}{\sqrt{X^2+2\varepsilon X-b^2}}\,{\rm d}X.
$$
The \textit{differential excess} ${\rm d}\Delta$ concerning a single $E$-point of the hyperbola is the shift between the relevant tangent segment and the arclength whenever $X$ undergoes a change ${\rm d}X$, so that the radius changes of 
$$
{\rm d}r=\frac{\sqrt{a}}{2}\frac{1}{\sqrt{X+2\varepsilon}}\,{\rm d}X.
$$
Thus:
\begin{equation}
{\rm d}\Delta={\rm d}(\overline{EP})-{\rm d}s=\frac{\sqrt{a}}{2}\frac{1}{\sqrt{X^2+2\varepsilon X-b^2}}\,\frac{b^2}{X\sqrt{X}}\,{\rm d}X.\label{fond}
\end{equation}
 No doubt such a fluxion belongs to the third class of  Maclaurin's ranking of irrational ones.
\subsection{The excess \textit{p}-formula  and its consequences}

Starting from (\ref{fond}), minding the $X$-definition, Maclaurin puts\footnote{Really Maclaurin used $z$ but we changed to $\zeta $.}
$
p^{2}/{a}=b^{2}/{X}=\zeta, 
$
so that $dX=-b^{2}\,{\rm d}\zeta/\zeta^{2}$.  In such a way (\ref{fond}) becomes:
$$
{\rm d}\Delta=\frac{-\sqrt{a}}{2}\frac{\sqrt{\zeta }}{\sqrt{b^2+2\varepsilon \zeta -\zeta ^2}}\,{\rm d}\zeta.
$$
Such a formula\footnote{Notice that due to a print error in the 1742 edition, and not amended in the 1801 one, the factor \lq\lq2''  in the above formula at \textit{Fluxions}, page 245, row 6, is missing.} allows to construct the \textit{excess p-formula}. For the purpose, plugging in $\zeta $ as a function of $p$, after a little algebra, we have:
 \begin{equation}
{\rm d}\Delta=\frac{-p^2}{\sqrt{a^2 b^2+2\varepsilon ap^2-p^4}}\,\,{\rm d}p\label{dDp}
\end{equation}
namely the \textit{excess p-formula}, see \textit{Fluxions}, page 245, row 8. Let us go to the implications of \eqref{dDp}. First, let us put $a=b$, then  $\varepsilon=0$, equilateral hyperbola, so that:
 \begin{equation}
{\rm d}\Delta=\frac{-p^2}{\sqrt{a^4-p^4}}{\rm d}p\label{ddDp}.
\end{equation}
This connection really explains why Maclaurin, D'Alembert and Landen expended considerable effort over a curve, like the hyperbola, only marginally involved in the astronomic or ballistic computations. 
 In other words, given the hyperbola in \textit{pedal} form
 $$
 r(p)=a\left(2\varepsilon+a\frac{b^{2}}{p^{2}}\right),
 $$
 let us construct the excess ${\rm d}\Delta$ between the length of the tangent at a point and the arc of the hyperbola from its vertex to that $E$-point.
 Then the elastica curve can stem also by integrating the excess (\ref{ddDp}); or, as seen,  by rectifying the lemniscate.\footnote{ The shortest way driving to the rectification of the
lemniscate of equation $\rho^{2}=R^{2}\cos (2 \theta) $ is that assuming the polar anomaly as a variable: the elementary arc is
\[
{\rm d}s=R\frac{{\rm d}\theta}{\sqrt{\cos(2\theta)}}, 
\]
so that the one-quarter arclength is given by: 
\[
s_{\frac{1}{4}}=R\int_{0}^{\frac{\pi }{4}}\frac{{\rm d}\theta }{\sqrt{1-2\sin
^{2}\theta }}=\frac{R}{\sqrt{2}}\,\boldsymbol{K}\left( 
\frac{1}{\sqrt{2}}\right). 
\]
where $\boldsymbol{K}\left(\tfrac{1}{\sqrt{2}}\right)$ is the complete elliptic integral of first kind of modulus $k=\tfrac{1}{\sqrt{2}}$.} A further consequence of the $p$-formula \eqref{ddDp} of the differential excess consists of providing the finite excess, say $\Delta_\infty$, when the $E$-point, moving on the hyperbola, slides to infinity.\footnote{Even though the limit concept was a matter the of the XIX century, the notation used by Maclaurin was quite clumsy: the symbol \textit{lim} made its first appearance in the memoir \textit{Exposition elementaire des principes des calculs superieurs} by S. Lhuilier, Berlin, 1786.}
In such a case, both the tangential length and the arc, are really increasing without limits, so that their difference would appear indeterminate. We will show in the next section the relevant Maclaurin evaluation of the excess by means of a series expansion. 

When the E-point on the hyperbola slides from E to infinity, the perpendicular segment $p$ changes from the value $a$ to 0.
Minding the $e$ and $\varepsilon$ definitions, then \eqref{dDp} becomes:
$$
{\rm d}\Delta=\frac{p^{2}}{ab}\left(1+\frac{p^{2}}{b^{2}}\right)^{-1/2}\left(-\frac{a}{\sqrt{a^{2}-p^{2}}}\,{\rm d}p\right).
$$
Expanding the second factor in binomial series, Maclaurin obtains:
$$
{\rm d}\Delta=\frac{1}{ab}\left(-\frac{a}{\sqrt{a^{2}-p^{2}}}\,{\rm d}p\right)\left(p^{2}-\frac{p^{4}}{2b^{2}}+\frac{3}{8}\frac{p^{6}}{b^{4}}-\&c\right).
$$
After this -we think- he would have evaluated (putting $p=a\sin u$ in order to obtain three integrals of even powers of $\sin u$):
\[
\int_0^a\frac{p^2}{\sqrt{a^{2}-p^{2}}}\,{\rm d}p=\frac{\pi  a^2}{4},\,\int_0^a\frac{p^4}{\sqrt{a^{2}-p^{2}}}\,{\rm d}p=\frac{3 \pi  a^4}{16},\,\int_0^a\frac{p^6}{\sqrt{a^{2}-p^{2}}}\,{\rm d}p=\frac{5 \pi  a^6}{32}
\]
promptly leading to the final Maclaurin formula,\footnote{Maclaurin strangely writes $N$ instead of  $\pi/2$. The first to use $\pi$ definitely for the ratio of circumference to radius was William Jones (1675-1749) in his \textit{Synopsis Palmariorum Matheseos}, 1706. Euler adopted the symbol in 1737 and since then it became of general use.} which we can read at  \textit{Fluxions}, page 245, row 8:
\begin{equation}\label{maccessoappr}
\Delta_\infty=\frac{\pi a^{2}}{2b}\left(\frac{1}{2}-\frac{3a^{2}}{16b^{2}}+\frac{15a^{4}}{128b^{4}}-\&c\right).
\end{equation}
\subsection{What Maclaurin could not know\ldots}
Starting from \eqref{dDp}, let us consider the \textit{excess elliptic integral}
\[
\Delta_{\infty}=\int_0^a\frac{p^2}{\sqrt{a^2b^2+2\varepsilon ap^2-p^4}}\,{\rm d}p
\]
but recalling relationships between $\varepsilon,\,e,\,a,$ and $b$ and changing variable by $p^2=q$ we get:
\[
\Delta_{\infty}=\frac12\int_0^{a^{2}}\sqrt{\frac{q}{(b^2+q)(a^2-q)}}\,{\rm d}q.
\]
Let us refer to \cite{Grr}, page 263 integral 3.141-10: then the excess is given by the difference between two complete elliptic integrals of second and first kind:
\begin{equation}\label{maccesso}
\Delta_{\infty}=\sqrt{a^2+b^2}\,\boldsymbol{E}(k)-\frac{b^2}{\sqrt{a^2+b^2}}\,\boldsymbol{K}(k)
\end{equation}
where the elliptic modulus $k$ is given by
$
k=a/\sqrt{a^2+b^2}.
$
Now by the hypergeometric series expansions for $\boldsymbol{K}(k)$ and $\boldsymbol{E}(k)$
\[
\begin{split}
\boldsymbol{K}(k)&=\frac{\pi}{2}\,_{2}\mathrm{F}_{1}\left( \left. 
\begin{array}{c}
\frac12;\frac12 \\[2mm]
1
\end{array}
\right| k^2\right)=\frac\pi2\,\sum_{n=0}^\infty\left[\frac{(2n)!}{2^{2n}(n!)^2}\right]^2k^{2n}\\
\boldsymbol{E}(k)&=\frac{\pi}{2}\,_{2}\mathrm{F}_{1}\left( \left. 
\begin{array}{c}
-\frac12;\frac12 \\[2mm]
1
\end{array}
\right| k^2\right)=\frac\pi2\,\sum_{n=0}^\infty\left[\frac{(2n)!}{2^{2n}(n!)^2}\right]^2\frac{k^{2n}}{1-2n}
\end{split}
\]
expanding around $a=0$ relation \eqref{maccesso}, we find exactly \eqref{maccessoappr}.

Somewhat like to \eqref{maccesso} was also found by John Landen in 1780 as we will see in the next section.

\section{Landen}

\subsection{The hyperbola's theorem publishing history}
In the introduction to his \textit{Th\'{e}orie des fonctions analytiques}, 1797,
Joseph-Louis Lagrange (1736-1813) refers to John Landen (1711-1790) as a \textit{habile G\'{e}om\'{e}tre anglais}. Landen, famous among the most anti-academic mathematicians, had 
one final
purpose: improving Maclaurin and D'Alembert's results \cite{lan1}:
\begin{quote}
Mr. Maclaurin, in his Treatise of fluxions, has given sundry very elegant
theorems for computing the fluents of certain fluxions by means of elliptic
and hyperbolic arcs; and Mr. D'Alembert, in the Memoirs of Berlin Academy,
has made some improvement upon what had been written on that subject. But
some of the theorems given by those gentlemen being in part expressed by the
difference between an arc of an hyperbola and its tangent, and such
difference being not directly attainable,\ldots
\end{quote}
The hyperbola papers are in substance only one in its three variants 1771, 1775, 1780. Each of them holds the hyperbola rectification theorem, some corollary about the excess, and an application to a circular pendulum.


The first paper \cite{lan1} where Landen faced with the problem of hyperbola rectification was read on June 6, 1771 and enclosed in vol. 61 of the \textit{Philosophical Transactions of the Royal Society} under the title \textit{A disquisition\ldots} Its startup has been referred: his main criticism again of the limiting value of the excess will be analyzed later. There he refers explicitly to integration methods by means of arcs of hyperbola and ellipse and notes to have performed the hyperbola rectification, the proof of which he would show later in his second memoir. Afterwards Landen tried to apply this to the fluxion of time of a heavy bead freely descending from
rest along a circular arc.

The article of 1775 summarizes the conclusions of that issued four years earlier and describes his finding on page 285 with these words: 
\begin{quote}
Thus, beyond
my expectation, I find that the hyperbola may in general be rectified by means of two ellipses!
\end{quote}
After having applied again the fluxions to the time of a pendulum, Landen recalls that 
Maclaurin, Jakob
and Johann Bernoulli, and Leibniz deemed that the elastic curve could not be
constructed by the quadrature or rectification of conic sections. However, in \cite{lan2}, page 288:
\begin{quote}
the contents of this paper, properly applied, will evince that the
elastic curve (with many others) may be constructed by the rectification of
the ellipsis only, without failure in any point.
\end{quote}
Was Euler aware of know Landen's 1775 paper? Legendre's answer (\cite{Leg1}, p.
89) is negative:
\begin{quote}
Euler n'ait rien \'{e}crit \`{a} l'occason du M\'{e}moire de Landen,
imprim\'{e} dans les Transactions philosophiques de 1775, d' o\`{u} il faut
conclure que ce M\'{e}moire n'est pas parvenu \`{a} sa conaissance; car dans
l'hypoth\'{e}se contraire, cet illustre G\'{e}om\`{e}tre aurai sans doute,
suivant son usage, publi\'{e} sers propres r\'{e}flexions sur une
d\'{e}couverte analytique qui devait particuli\`{e}rement l'int\'{e}resser.%
\end{quote}
In any case, we do not agree with the level of importance commonly given to the 1775 article: Euler lived till to 1783 and he could have read its third version, published by Landen in \textit{Mathematical Memoirs}, 1780.

The final article \cite{lan3}
under the simple title \textit{Memoir of the ellipsis and the hyperbola}, was enclosed as II Memoir in the first volume of \textit{Mathematical memoirs}. The author, perhaps aware of the obscurity of his two first editions, looked for better clarity, that was not always achieved. In any case, we shall follow this closely.
Landen's aim was to investigate the
difference between an arc of an hyperbola and its tangent, since such differences were not directly attainable.

\subsection{Landen's excess formula as a consequence of  Maclaurin's}
In order to prove his hyperbolic theorem, Landen built as a first step, a formula for the hyperbolic excess which, either in his premises (Apollonius theorems, hyperbolic radius function of the $p$ normal length), or in the process (similar triangles, fluxions and chain of variables), tracks on the Maclaurin's scheme, as quoted by himself. 

Given the hyperbola of center S, semiaxes $a,\,b$ of equation \eqref{ipxya}, the hyperbola asymptotes are $y=\pm\,(b/a)\,x$. In a cartesian orthogonal reference frame we consider in the first quadrant the branch of the above hyperbola \eqref{ipxya} of \textit{eccentricity}
$
e=\sqrt{(a^{2}+b^{2})/a^{2}}.
$ 
Recalling $\varepsilon$ as introduced in formula \eqref{lengeps} we also recall $X$ from equation \eqref{ics}.
Taking the derivative with respect to $X$, he finds:
$$
{\rm d}(\overline{EP})=\frac{\sqrt{a}}{2x\sqrt{X}}\frac{X^2+b^2}{\sqrt{X^2+2\varepsilon X-b^2}}\,{\rm d}X.
$$
The \textit{differential excess} ${\rm d}\Delta$ concerning a single E-point of the hyperbola is the shift between the relevant tangent segment and the arclength whenever $X$ undergoes a change ${\rm d}X$, is found to be as in equation \eqref{fond}:
 Landen's 1780 demonstration adds nothing to Maclaurin's, and we will refrain from expanding on it. We prefer to display how, starting from \eqref{fond}, taken from \textit{Fluxions}, one can quickly express ${\rm d}\Delta$ given as the square root of a ratio of two quadratic binomials of a certain variable.

Let us start from the $(x,y)$ coordinates equation \eqref{ipxya} which will not be the final hyperbola of the theorem. The length set by Maclaurin as $\varepsilon,$ \eqref{lengeps}, is named by Landen
$f=(a^{2}-b^{2})/(2a).$ 
The perpendicular length $p$ stated by Landen as: $ p^{2}=m z$ which, compared with \eqref{ics}, provides the relationship linking the state variable $X$ by Maclaurin, to the Landen's $z$:
\begin{align}
X&=\frac{b^{2}}{z}\label{icchese}\\
{\rm d}X&=-\frac{b^{2}}{z^{2}}{\rm d}z.\label{de-icchese}
\end{align}
Then, plugging \eqref{icchese} and \eqref{de-icchese} in \eqref{fond} one gets:
\begin {equation}
{\rm d}\Delta=-\frac{1}{2}\frac{\sqrt{a}\sqrt{z}}{\sqrt{b^{2}+2fz -z^{2}}} {\rm d}z,\label{delta}
\end{equation}
which  can be read at Landen's \textit{Memoirs}, page 25, row 3.
Up to this point he makes a change of coefficients putting:
\begin{equation}
a=m-n;\quad  b=2\sqrt{mn},\label{semi}
\end {equation}
and defines, instead of $z$ a new variable:
\begin {equation}
 t^{2}=(m-n)^{2}-p^{2},\label{ti}
\end{equation}
so that $mz=m^{2}-t^{2}.$ In such a way the excess (\ref{delta}) will become:
\begin {equation}
{\rm d}\Delta= \sqrt{\frac{(m-n)^{2}-t^{2}}{(m+n)^{2}-t^{2}}}\,\,{\rm d}t .\label{excess}
\end {equation}
Having assumed $t$ defined by (\ref{ti}) as a new independent variable in the hyperbola equation \ref{ipxya}, then (\ref{excess}) provides the excess of the hyperbola whose semiaxes are given, see (\ref{semi}), by the difference and by the double geometrical mean of the coefficients  $(m,n)$ entering \eqref{excess}. Of course in there $m-n$ is the transverse semiaxis of the hyperbola, so that, wishing to compute its finite excess, $t$ shall be spanned minding \eqref{ti}.
The first of  \figurename~\ref{f21} shows the geometrical link of length segments $t,\,m-n$ and $p$.
\begin{figure}[p]
\begin{center}
\scalebox{0.8}{\includegraphics[angle=180]{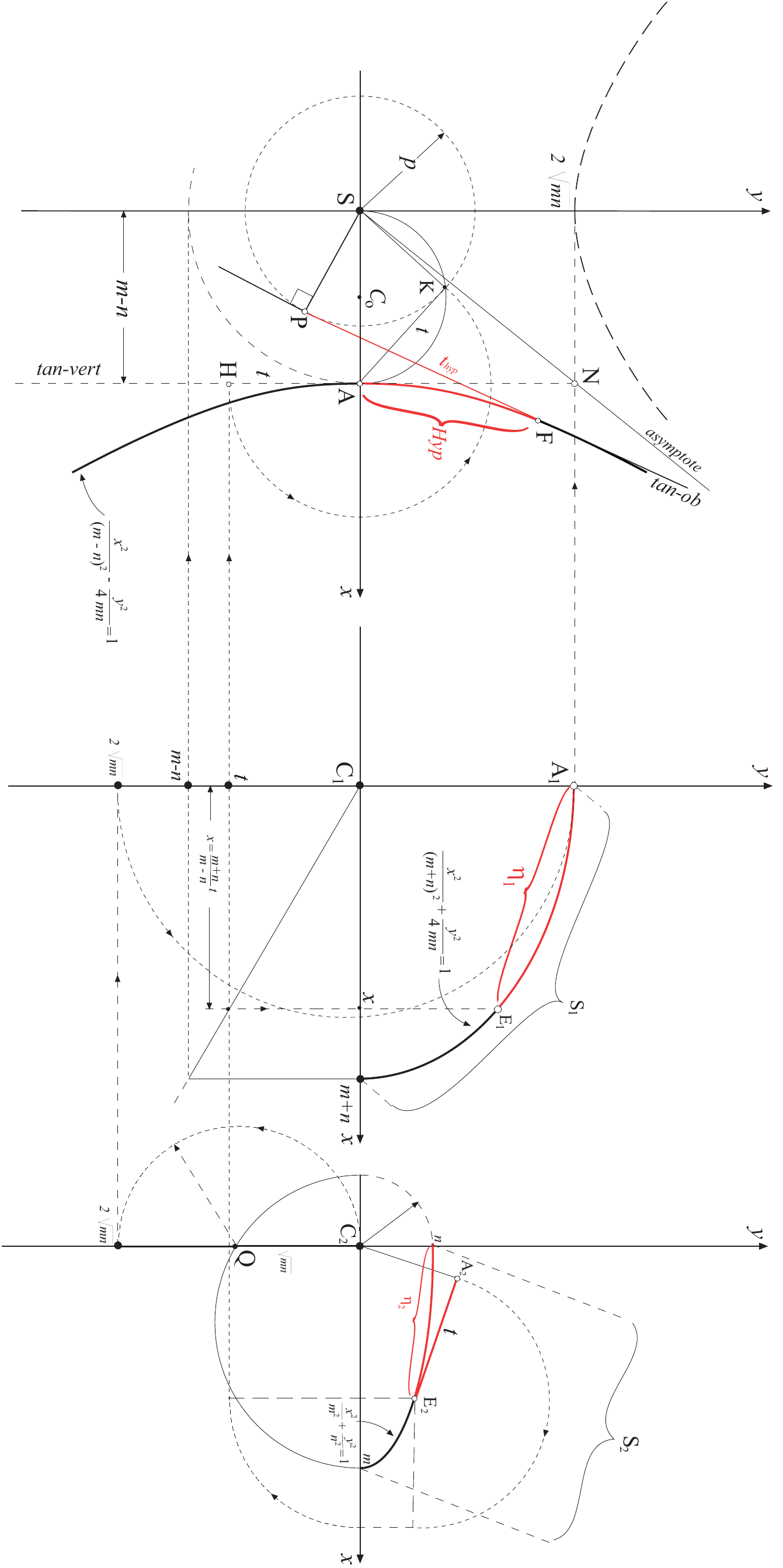}}
\end{center}
\caption{Our geometric view of all relationships in Landen theorem.}\label{f21}
\end{figure}

\subsection{Landen's auxiliary ellipses}
To deal with the last integral \eqref{excess} which was out of his capabilities, Landen followed the approach of integration by means of curves. He started from a first ellipse,
$$
\frac{x^{2}}{a'^{2}}+\frac{y^{2}}{b'^{2}}= 1
$$
whose differential arclength, by means of the characteristic triangle, can, after little work, be written as:
\begin {equation}
{\rm d}\eta_{1}=\sqrt{\frac{a'^{2}-g'x^{2}}{a'^{2}-x^{2}}}\, {\rm d}x\label {di-etauno}
\end {equation}
where 
$g'=(a'^{2}-b'^{2})/{b'^{2}}.$
Afterwards he specialized the curve assuming $a'=m+n,\, b'=2\sqrt{mn}$ where $m-n$ and $2\sqrt{mn}$ are the semiaxes of the hyperbola, and 
$
g'=\left[(m-n)/(m+n)\right]^{2}.
$
Making the change of variables $x\mapsto t$: 
\begin{equation}
x=\frac{m+n}{m-n}\,t, \label{xt}
\end{equation}
the finite $\eta_{1}$ first ellipse's  arclength will be the fluent\footnote{Of course what above means that the required elliptic arc has been taken between abscissa zero and $x$ and is given by the definite integral: $$\int_{0}^{\frac{m-n}{m+n}x}\sqrt{\frac{(m+n)^{2}-t^{2}}{(m-n)^{2}-t^{2}}}{\rm d}t=(m+n)\,E\left(\arcsin\frac{x}{m+n},\frac{m-n}{m+n}\right)$$
where we used entry 3.171-14 page 306 of \cite{Grr}.} of:
$
\sqrt{((m+n)^{2}-t^{2})/((m-n)^{2}-t^{2})}.
$
Let the second ellipse have semiaxes $m$ and $n$, so that 
$
g=(m^{2}-n^{2})/{m^{2}}.
$
We make reference to the mid  \figurename~\ref{f21}: chosen along the ellipse a point E of abscissa $x$, we draw the tangent at this point, whose segment $t$ is limited by E and the intersection P with the perpendicular sent from the $C_{1}$ ellipse's center.

By means of standard procedures, one can find the value of $\overline{EP}=t$ as a function of $x$:
\begin {equation}
t=gx\sqrt{\frac{m^{2}-n^{2}}{m^{2}-gx^{2}}} .\label {ti of x}
\end {equation}
Solving to $x^{2}$:
\begin {equation}
2gx^{2}=t^{2}+gm^{2}\mp\sqrt{(m^{2}-n^{2} )^{2}-2t^{2}(m^{2}+n^{2})+t^{4}}.\label {squarex}
\end {equation}
 Landen discards the minus sign, so that, after differentiation, writing the radicand as
$$
\left[(m-n)^{2}-t^{2}\right]\left[(m+n)^{2}-t^{2}\right]
$$
by \eqref{squarex} he gets the fluxion:
$$
2gx{\rm d}x=t{\rm d}t+\frac{(m^{2}+n^{2})t\,{\rm d}t -t^{3}\;{\rm d}t}{\left[(m-n)^{2}-t^{2}\right]^{1/2}\left[(m+n)^{2}-t^{2}\right]^{1/2}}
$$
as written in his \textit{Memoirs}, page 32, row 4 from bottom.
Dividing last formula\footnote{About the above formula, we read in \cite{Smadja} the comment:
\begin{quote}
La r\'eduction de l'arc hyperbolique \`{a} deux arcs elliptiques r\'esulte en effet de la d\'ecomposition alg\'ebrique d'une expression rationnelle en une somme de deux expressions rationnelles plus simples, en sorte que la proc\'edure alg\'ebrique op\`{e}re sur les termes fluxionnels en mettant de c\^{o}t\'e ce que Landen nomme leur \lq\lq figurative sense''.
\end{quote}
} to $t$ and minding \eqref{ti of x}, he gets:
$$
\frac{gx\,{\rm d}x}{t}=\sqrt{\frac{m^{2}-gx^{2}}{m^{2}-x^{2}}} {\rm d}x, \label{pippow}
$$
which has the same right hand side of (\ref{di-etauno}), which provides the elliptic elementary arc ${\rm d}\eta_{1}$  whose semiaxis $a$ has been changed to $m$:  such a new elementary arclength we are naming ${\rm d}\eta_{2}$.
Therefore:
\begin {equation}
{\rm d}\eta_{2}=\frac{gx{\rm d}x}{t}=\frac{1}{2}\, {\rm d}t+\frac{1}{4} \sqrt{\frac{(m-n)^{2}-t^{2}}{(m+n)^{2}-t^{2}}}\, {\rm d}t+\frac{1}{4} \sqrt{\frac{(m+n)^{2}-t^{2}}{(m-n)^{2}-t^{2}}}\,{\rm d}t ,\label{deta2}
\end {equation}
where the element $dt$ is tangential to the $(m,n)$ ellipse.
\subsection{The finite excess and Landen's geometric Theorem}
Before integrating formula \eqref{deta2} it will be observed that at its right hand side the second term means the elementary excess of hyperbola of semiaxes  $m-n$ and $2\sqrt{mn}$, whilst the third term is the elementary arc (\ref{di-etauno}) of the ellipse of semiaxes  $m+n$ and $2\sqrt{mn}$. In such a way, by integration we will get:
$$
\eta^{(A_{2}E_{2})}=\frac{1}{2}\overline{PE}+\frac{1}{4}(\overline{FP}-\overline{FA})+\frac{1}{4}\eta^{(A_{1}E_{1}).}
$$
Writing  $\overline{PF}= t_{Hyp}$ and recalling $\overline{PE}=t$, we get:
\begin {equation}
\overline{AF}=Hyp=t_{Hyp}+2t+\eta^{(A_{1}E_{1})}-4\eta^{(A_{2}E_{2})}.\label{llanden}
\end {equation}
We now highlight to how the \textit {Landen  hyperbolic theorem itself  can be read as belonging to the family of integrable combinations\footnote{We will come back later about them.}}: it states in fact that even if each of the three arcs of hyperbola + ellipse 1+ ellipse 2 is not algebraically integrable, nevertheless their linear combination
$
Hyp-\eta^{(A_{1}E_{1})}+4\eta^{(A_{2}E_{2})}
$
is algebraically integrable and equates the addition of segments $t_{Hyp}+2t$.

The relationship \eqref{llanden} displays the Landen theorem:

\begin{teorema}
The arc $\overline{AF}$ of hyperbola from a vertex A to F, whose elementary excess is given by \eqref {excess} and then having semiaxes $(m-n)$ and $2\sqrt{mn}$, is computable by \eqref{llanden}, where: 
\begin{enumerate}
\item[a)] $\overline{FP}$ is the straight segment of pedal \footnote{Such a tangent is said to be \lq\lq pedal'' because it is bound by the P-foot of the perpendicular drawn through a fixed "pedal point" to it. The \lq\lq pedal curve'' of a given line is the locus of all the P-points when the tangent varies continuously along the profile of the line. The pedal curve of a rectangular hyperbola with the pedal point at its focus is a circle, but with pedal point at its center, one will find a lemniscate of Bernoulli.} tangent to hyperbola  at F;

\item[b)] $t_{Hyp}$ is the straight segment  of pedal tangent to the ellipse of semiaxes $(m,n)$;

\item[c)] $\eta^{(A_{1}E_{1})}$ is the arc of the ellipse 1 of  semiaxes $(m+n, 2\sqrt{mn})$;

\item[d)] $\eta^{(A_{2}E_{2})}$ is the arc of the mentioned ellipse 2

\end{enumerate}

\end{teorema}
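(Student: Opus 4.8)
\noindent\emph{Proof strategy.}
The plan is to derive \eqref{llanden} by nothing more than integrating the already-established differential identity \eqref{deta2} and then translating each integrated term into the geometry of the three curves. The entire substance of the theorem sits in the decomposition
\[
{\rm d}\eta_2=\frac12\,{\rm d}t+\frac14\sqrt{\frac{(m-n)^2-t^2}{(m+n)^2-t^2}}\,{\rm d}t+\frac14\sqrt{\frac{(m+n)^2-t^2}{(m-n)^2-t^2}}\,{\rm d}t,
\]
whose three summands I would read, in order, as an exact differential, one quarter of the elementary hyperbolic excess \eqref{excess} of the hyperbola with semiaxes $(m-n,\,2\sqrt{mn})$, and one quarter of the elementary arc \eqref{di-etauno} of ellipse~1, of semiaxes $(m+n,\,2\sqrt{mn})$, after the substitution \eqref{xt}. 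Recognising these last two is the linchpin, since it lets the single variable $t$ act at once as the tangential parameter of ellipse~2, the excess parameter of the hyperbola, and, through \eqref{xt}, the arc parameter of ellipse~1.

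First I would pin the lower limit of integration at the common vertex $A$. By \eqref{ti}, $t^2=(m-n)^2-p^2$, and at $A$ the pedal distance $p$ equals the transverse semiaxis $m-n$, so $t=0$ there; in the same configuration the hyperbola arc, its excess, the tangent segment, and both elliptic arcs all vanish. Integrating \eqref{deta2} term by term from $t=0$ up to the value of $t$ at $F$ therefore produces no boundary constants and gives
\[
\eta^{(A_2E_2)}=\tfrac12\,\overline{PE}+\tfrac14\bigl(\overline{FP}-\overline{FA}\bigr)+\tfrac14\,\eta^{(A_1E_1)},
\]
where $\overline{PE}=t$ is furnished by \eqref{ti of x}, and where I have used that the hyperbolic excess integrates to pedal tangent minus arc, namely $\Delta=\overline{FP}-\overline{FA}$, straight from its defining relation ${\rm d}\Delta={\rm d}(\overline{EP})-{\rm d}s$.

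Multiplying by four and solving for the hyperbola arc $\overline{AF}=Hyp$ returns \eqref{llanden} at once; equivalently it exhibits the \emph{integrable combination} $Hyp-\eta^{(A_1E_1)}+4\,\eta^{(A_2E_2)}=t_{Hyp}+2t$, in which three individually non-elementary arcs add up to a purely algebraic, rectilinear quantity. The two straight summands are then identified as pedal tangent segments by means of the Apollonius corollary \eqref{scorollary} and relation \eqref{ti of x}, which settles items (a)--(b); items (c) and (d) hold by the very construction of $\eta^{(A_1E_1)}$ and $\eta^{(A_2E_2)}$.

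The hard part will not be the integration, which is mechanical once \eqref{deta2} is granted, but the geometric bookkeeping: keeping the three curves coherently parametrised by the single variable $t$, confirming that every additive constant really cancels at the common vertex, and giving an unambiguous reading of which rectilinear segment is the hyperbola's pedal tangent at $F$ and which is that of the $(m,n)$-ellipse --- a distinction that Landen's own notation makes genuinely easy to misstate.
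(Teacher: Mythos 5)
Your proposal is correct and takes essentially the same route as the paper: it too reads the three summands of \eqref{deta2} as an exact differential, one quarter of the elementary hyperbolic excess \eqref{excess}, and one quarter of the elementary arc \eqref{di-etauno} of ellipse~1, integrates term by term, and solves for the hyperbola arc to obtain \eqref{llanden}. Your explicit check that every quantity vanishes at the common vertex ($t=0$, where $p=m-n$), so that no boundary constants appear, merely makes precise a step the paper leaves implicit.
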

 According to G. N. Watson, who has really understood \cite{Marquis} all the above machinery of ellipses:
\begin{quote}
Pairs of ellipses whose semiaxes are related in the manner of two ellipses of this problem are said to be connected by Landen's transformation. In the hands of Legendre the transformation became a most powerful method for computing elliptic integrals.
\end{quote}
Let us add our 12-step operations' sequence in order to achieve a complete geometrical overview of all the machinery \eqref{llanden}, starting from the sole knowledge of $m$ and $n$. We refer to  \figurename~\ref{f21}.
\begin{enumerate}
\item[1)] By $m>0$ and $n>0$, we construct the segments $m-n$ and $2\sqrt{mn}$: they will be semiaxes of the hyperbola which we draw pointwise on a $(S, x, y)$ frame, branch $(x>0, y>0)$, vertex A. Let us draw the asymptote \textit {asy} too.

\item[2)] We draw the vertical straight line \textit {vert-tan} passing through A.

\item[3)] Half-circle over the diameter $\overline{AS}$ and whose centre is $C_{0}$.

\item[4)] Centre $C_{1}$, we draw the quadrant  $(x> 0, y> 0)$ of the ellipse 1:
$$
\frac{x^{2}}{(m+n)^{2}}+\frac{y^{2}}{4mn}= 1
$$
of semiaxes  $m+n$ and $2\sqrt{mn}$, the last quantity being provided as ordinate of the intersection N between \textit {vert-tan} and \textit {asy}. 

\item[5)] Let us draw $(x>0, t>0)$ the straight line of \eqref{xt} connecting the origin to Z$\equiv(t=m-n, x=m+n)$ being the $t$ reference axis heading downwards.

\item[6)] Centre $C_{2}$, we draw the quadrant  $(x>0, y>0)$ of the ellipse:
$$
\frac{x^{2}}{m^{2}}+\frac{y^{2}}{n^{2}}= 1
$$
of semiaxes  $m$ and $n$.
 
\item[7)] Choosing a point E on the second ellipse, and the relevant E-tangent, we construct the perpendicular from $C_{2}$ to it, P is found:  let it be $\overline{A_{2}E_{2}}=t$ which we transfer as $\overline{C_{1}t}$, $\overline{AH}$, and so on.

\item[8)] From the third diagram we enter the second one and, given $t$, by means of the projecting horizontal line, we get both $x$ and $\eta^{(A_{1}E_{1})}$.

\item[9)] Entering the hyperbola, let H be the intersection of the mentioned projecting horizontal line with \textit {vert-tan}. 

\item[10)] Centre in A, spread $\overline{AH}$, we get a circle which crosses that of centre $C_{0}$, finding K.

\item[11)] The triangle AKS is right in K, so that, if $\overline{AK}=t$, $\overline{AS}=m-n$,  so that:
$
\overline{SK}^{2}=\overline{AS}^{2}-\overline{AK}^{2}
$
then by \eqref{xt}, it shall be: $\overline{SK}=p$.

\item[12)] Known $p$,  we draw the circle of centre S and radius $p$, and consider the straight line 
\textit {tan-ob} touching \textit{simultaneously} it and the branch  $(x> 0, y> 0)$ of the hyperbola. Such a tangent is unique.\footnote{Apart from each possible intuition, the thing can easily be set  analytically. First, one assumes a named $(x_{P},y_{P})$ point of the circle and writes its tangent there, which, by means of the circle equation, will be parametrized on the abscissa of such a point. The same for the hyperbola on the abscissa of its generic point $(x_{F},y_{F})$. Imposing that both slopes and intercept of two straight lines shall be the same, one gets a 2-equation, 8th degree non-linear algebraic system holding the required abscissas. Then we find four straight lines, but restricting to the hyperbola's upper half branch $(x>0,y>0)$, there will only be one bitangent, say the straight line named \textit{tan-ob} in \figurename~\ref{f21}.} In this way we find both points P and F. 
\end{enumerate}

We have then shown the way of drawing the finite hyperbolic excess  between the points A and F of the $(m, n)$ hyperbola parametrized on the $t$ value of the tangent length cut on the second $(m,n)$ ellipse. Furthermore we displayed how the fixing of $t$ is equivalent to the fixing of the upper F point on the hyperbola.

\subsection{Simpson's Fluxions and his treatment of hyperbolic excess}

Thomas Simpson's (1710-1761) \textit{A New treatise of Fluxions} was issued in 1737 (five years before Maclaurin's); in 1750 he published a new edition of it, which, however, he had wished to be considered as new work rather than a new edition of an old one. We consulted the posthumous reprint, 1776, where, page 509 of volume I, Simpson describes the problem:
 \begin{quote}
To determine the difference between the length of the arch of a semi-hyperbola infinitely produced, and its asymptote.
 \end{quote}
 Simpson could not have seen any of Landen papers (1771, 1775, 1780) so this is either his personal elaboration (following Maclaurin whose treatise had went out in 1742), or a reflection of some discussion on the subject he had with Landen.
 Let us give some elements on his treatment. Both Maclaurin and Landen in treating such a subject make use of non-cartesian variables and for their purpose produce pedal coordinates, polar anomalies, lengths of the tangent, and so on, integration variables not all having a direct geometric visibility. On the contrary, Simpson makes use only of the cartesian orthogonal coordinates $(x,y)$. If $a$ and $b$ denote the hyperbola semiaxes, being the first over the $x$ axis, of course, $y=(b/a) \sqrt{x^{2}-a^{2}}$, for its elementary arclength he obtains:
\[
 {\rm d}s=-\frac{a}{\delta}\, \frac{\sqrt{1-\delta^{2}u^{2}}}{u^{2}\sqrt{1-u^{2}}}\,{\rm d}u
 \]
where $ \delta^{2}=a^{2}/(a^{2}+b^{2}).$
Expanding the power $1/2$ by means of the binomial theorem and integrating term by term, the first integrated one gives by itself the length of the tangent. In such a way: 
\begin{quote}
therefore the difference between the arc and the asymptote will be equal to the fluent of the remaining terms in the difference sought.
\end{quote}

Finally, the hyperbola arclength is computed in some particular cases of interest.

\subsection{Fagnano's theorem on elliptic arcs whose difference is rectifiable}
We have been ventured to mention the \textit{integrable combinations} which Johann Bernoulli had introduced, 1695. He showed that on some curves two arcs whose difference was rectifiable could be found, although each separate arc could not be rectified.
 
 That subject is quite close to the work of  the Italian mathematician Giulio Carlo Toschi di Fagnano (1682-1766), Fellow of the Royal Society since 1723, who published in 1716 a paper\footnote{In 1750 this article and almost all his writings entered his collected works entitled \textit{Produzioni matematiche}. Finally in 1911-12 a modern complete edition \cite{Gambioli} was issued, with better figures, intelligible formulas, letters, and a detailed biography of Fagnano.} \cite{Giorn}, which has some connection with that of Landen's.
 Fagnano proved that on some curves it is possible to find infinite arcs whose differences can be algebraically found, \textit{even if the single arcs cannot be rectified}. Or, analytically speaking, infinite differential combinations integrable over those curves.
We can read in \cite{Leg1}:
\begin{quote}
Un G\'{e}om\`{e}tre italien dÕune grande sagacit\'{e}, ouvrit la route \`{a} des sp\'{e}culations plus profondes. Il prouva que sur toute ellipse ou sur toute hyperbole donn\'{e}e, on peut assigner, dÕune infinit\'{e}
 de mani\`{e}res, deux arcs dont la diff\'{e}rence soit \'{e}gale \`{a} une quantit\'{e}
 alg\'{e}brique. Il d\'{e}montra en m\`{e}me temps que la courbe nomm\'{e}e lemniscate jouit de cette singuli\`{e}re propri\'{e}t\'{e}, que ses arcs peuvent \`{e}tre multipli\'{e}s ou divis\'{e}s alg\'{e}briquement, comme les arcs de cercle, quoique chacun dÕeux soit une transcendante dÕun ordre sup\'{e}rieur.
\end{quote}
Fagnano was especially successful with the cubic parabola, the lemniscate\footnote{The general lemniscate is known as a \textit{Cassini oval}, 1680. A  special kind of it, called \textit{hyperbolic}, was considered by Jakob Bernoulli, and investigated by Fagnano since 1750, and by Euler, 1751,1752.} and the ellipse, but he is now better remembered in connection with the latter.
On a quadrant AQPB (see \figurename~\ref{f22} ) of a given C-centered ellipse, Fagnano found pairs of points, like P and Q, such that the difference of the arcs  $\overline{BP}$ and  $\overline{AQ}$ is rectifiable by ordinary integration. It was afterwards found that if perpendiculars CM and CM' are drawn from C, onto the tangents at P and Q, then $\overline{PM}=\overline{QM'}$; and that each of these is equal to the difference between the two arcs mentioned.
\begin{figure}[H]
\begin{center}
\scalebox{0.75}{\includegraphics{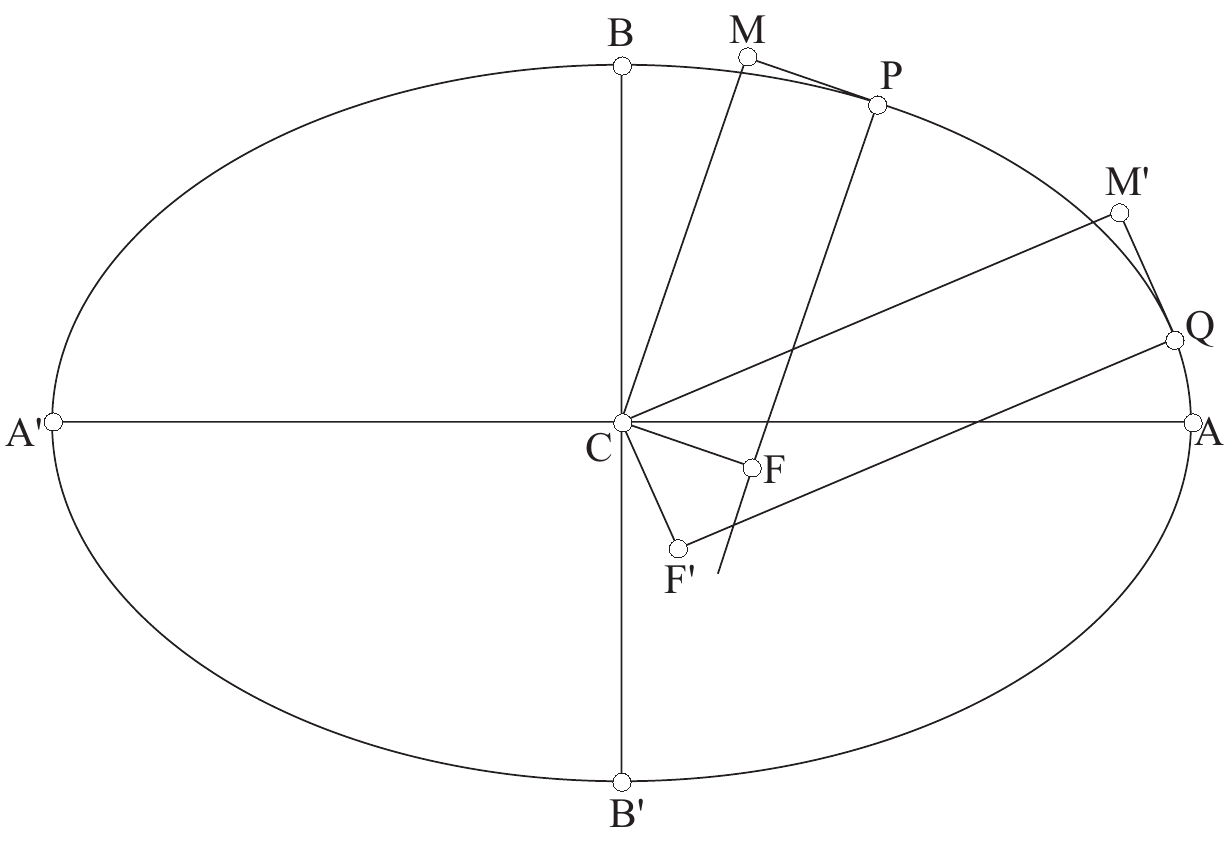}}
\end{center}
\caption{Fagnano theorem on ellipse's arcs}\label{f22}
\end{figure}

Fagnano's work, though with methods different from his  \textit{Produzioni matematiche}, is considered quite extensively in \cite{Green}, pages 182-189.

The Italian mathematician Francesco Siacci (1839-1907) see \cite{Siacci}, collecting some Fagnano's  theorems and further contributions (alternative proofs, additions, details, corollaries, geometrical constructions, and so on) due to:  Leonard Euler (1707-1783), Adrien Marie Legendre (1752-1833), Augustin Luis Cauchy (1789-1855), C. K\"{u}pper\footnote{About K\"{u}pper we know that he was author of the item \cite{kup}, but we did not succeed in finding anything else about this German professor of the 19th century.}, William Wallace (1768-1843), John Brinkley (1763-1835), Pierre Verhulst (1804-1849), Paul Serret (1827-1898), Michel Chasles (1793-1880), and  Johann August Gr\"{u}nert (1797-1872), \textit{surprisingly did not ever cite Landen!}
On the contrary, in his memoir \textit{Of the ellipsis and hyperbola} about the hyperbolic excess theory, in articles 4, 9 and 13, see \figurename~\ref{f23},
\begin{figure}[H]
\begin{center}
\scalebox{0.99}{\includegraphics{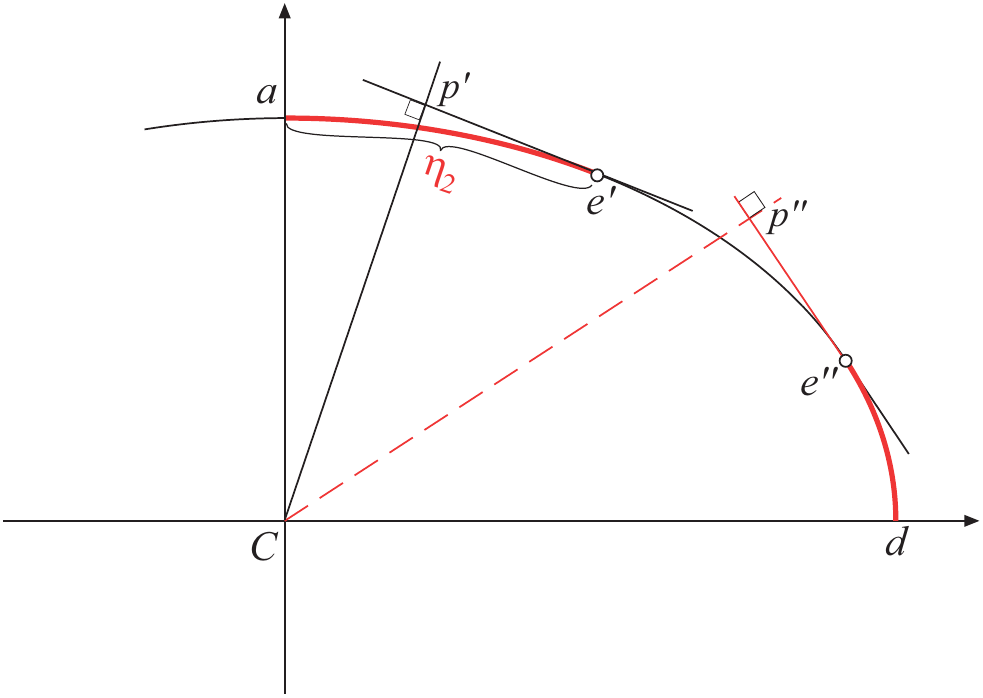}}
\end{center}
\caption{Scheme of Landen's proof that a couple of elliptic arcs has a difference given by the segment $\overline{p'e'}$}\label{f23}
\end{figure}
\noindent by purely fluxional means, Landen constructs a theorem (by himself not specially highlighted but) whose statement is:
\begin{quote}
If  $\overline{p'e'}$ and  $\overline{p''e''}$ be equal tangents to the ellipsis $ae'e''d$; the arc $\overline{ae'}$ will be equal to the arc $\overline{de''}$+the tangent $\overline{p''e''}$
\end{quote}
\textit{namely nothing but the ellipse of Fagnano's theorem}. 
The theorem of \lq\lq Comte Fagnani'' was cited and explained by\footnote{Siacci's confusion is due to the Legendre relevant papers, namely \cite{Leg4} and \cite{Leg3}, inserted in succession in the same volume of the \textit{M\'{e}moires de l'Acad\'{e}mie Royale.}} \cite{Leg4}, and again in \cite{Leg2}, page 44.
On the contrary, we ignore whether Landen in 1780 was aware of Fagnano's article editions (1716; 1750) or not; in any case he preferred to discard it, showing his own proof. In any case he did not ever cite Fagnano, and we are inclined to believe he did not know about Fagnano's paper at all.
A comparison between the proofs of Fagnano and Landen will not be performed here.
\subsection{Landen and the hyperbola limit excess}
\subsubsection{His pendulum-based motivation}
In the paper of 1780, articles $2\div14$ concerning the hyperbola, are followed by articles 15 and 16 about the ellipse.
At page 34 Landen meets a differential of the type:
\begin{equation}
\frac{{\rm d}z}{\sqrt{z}\sqrt{n^{2}+2fz-z^{2}}},\label{ahhh}
\end{equation}
which he solves through arcs of ellipse, namely doing an integration by means of curves. Even if the second memoir holds application to a pendulum, the third memoir entitled \textit{Of the descent of a body in a circular arc}, is completely devoted to the heavy body's motion \textit{in vacuo} along a circular arc.
At page 37 the fluxion of the time of descent is set as:
$$
\frac{1}{2}\frac{r h^{-1/2}x^{-1/2}\dot{x}}{\sqrt{2b r-b^{2}-2 \overline {r-b}  x-x^{2}}},
$$
$h,\,r,\,b$ being  some constants and $x$ a variable distance during the bead motion.\footnote{Landen's above fluxion in modern terms reads as
$$
\frac{1}{2}\frac{r h^{-1/2}x^{-1/2}}{\sqrt{2b r-b^{2}-2({r-b})x-x^{2}}}\,{\rm d}x.
$$
}
The above integral is of the same type described at \eqref{ahhh}, which is exactly the same of \eqref{delta} previously met as the differential hyperbolic excess of Maclaurin's \eqref{fond}.
What the above confirms the practical tendency kept by Landen, who, even when faced with theoretical questions, saw calculus as a tool to be improved on more and more, in order to solve recreational problems, rectification of curves, problems of dynamics and algebraic equations as well.

Whenever the F point along the hyperbola goes to infinity, the excess becomes $\infty-\infty$, and then indeterminate.
Landen then provides different approaches for obtaining the value of the limit hyperbolic excess he calls $L$.
\subsubsection{The limit excess: first proof, 1771, 1775.}

The calculation is in article \cite{lan2} of 1775, organized as it follows.
Solving \eqref{llanden} to the arc of the second ellipse, he gets:
$$
\eta^{(A_{2}E_{2})}=\frac{1}{2} t+\frac{1}{4}(t_{Hyp}-Hyp)+\frac{1}{4}\eta^{(A_{1}E_{1})}.
$$
But when the hyperbola's point tends to infinity, then, see \eqref{ti}, $p\to 0$ so that $t\to m-n$. For the first ellipse the abscissa of point E becomes, see \eqref{xt}, $m+n$ and then the arc fills the whole quadrant, say $S_{1}$, while that relevant to the second ellipse becomes:
$
1/2S_{2}+1/2(m-n)
$
$S_2$ being the second ellipse quadrantal arc. Landen claims to have demonstrated the above relationship at art. 10 of his paper, 1771, an extremely long and useless subject which will be omitted here. Inserting all this in \eqref{llanden}, he gets:
$$
\frac{1}{2}S_{2}+\frac{1}{2}(m-n)=\frac{1}{2}(m-n)+\frac{1}{4}L+\frac{1}{4}S_{1}
$$
or:
\begin {equation}
L=2S_{2}-S_{1},\label{luna}
\end {equation}
so that the indetermination is solved: \textit{the limit $L$ of the excess of a $(m,n)$ hyperbola is found as a simple combination of quadrantal arcs of the auxiliary ellipses 1 and 2 whose semiaxes closely depend on $m$ and $n$}.
Two short presentations that attempt to translate the machinery of Landen's hyperbola theorem and its auxiliary ellipses into mathematical language of the 1900s on a pure analytical basis, without reference to Maclaurin or the rectification landscape, are due to \cite{Marquis} and to \cite{Cantor4} pp. 842-847, both readable papers, but where the historical context is deliberately lost.
\subsubsection{The Landen  limit excess final proof, 1780.}
With reference to \figurename~\ref{f21}, the starting point is:
 \begin {equation}
\overline{AF}=Hyp=t_{Hyp}+2t+\eta_{1}-4\eta_{2}\label{anden}.
\end {equation}
Landen applies\footnote {There is some notation discrepancy because Landen changed it very frequently, now calling $Q$ the arc of the first ellipse, $R$ that of the second, while their quadrantal arcs are named $E$ and $E''$ respectively. For a better reading we tried to keep, as far as possible, the same symbols through Maclaurin and all of Landen's variants: furthermore we will use a lighter notation for the incomplete arcs of ellipse, writing   $\eta_{1}$ and  $\eta_{2}$ instead of  $\eta^{(A_{1}E_{1})}$ and $\eta^{(A_{2}E_{2})}$ respectively.}  to the second ellipse  what is merely Fagnano's theorem on the elliptic arcs whose difference is rectifiable:
\begin {equation}
\overline{p'e'}=\overline{p''e''}=t=\eta_{2}-\overline {de''}.\label{Fag}
\end {equation}
Plugging (\ref{Fag}) in (\ref{anden}), he gets:
$
 Hyp=t_{Hyp} +\eta_{1}-2\eta_{2}-2\overline{de''}.
$
Naming $S_{2} $ the quadrantal arc $ad$, by \figurename~\ref{f21} we have:
$
\eta_{2}=S_{2}-\overline{e'e''}-\overline{e''d}
$
so that (\ref{anden}) becomes:
$
Hyp=t_{Hyp}-2S_{2}+2\overline{e'e''}+\eta_{1},
$
so that the excess will be given by $2S_{2}-2\overline{e'e''}-\eta_{1}.$ When the point on the hyperbola is going to infinity, we know the variable $t$ attains its maximum value $m-n$, so that the arc of first ellipse fills all its quadrant, assuming the value $S_{1}$.
Such a maximum value is unique so that it is relevant to only one arc of the second ellipse. Otherwise speaking, two different arcs $ae'$ and $e''d$ cannot exist at which $t$ attains its maximum. Then $e'\equiv e''$so that $\overline{e'e''}\to 0.$ The conclusion is then \eqref{luna}, again.

\subsection{Our direct approach to the excess through elliptic integrals.}

\begin{teorema}\label{cess}
For the hyperbola of equation \eqref{ipxya} with $a\geq b>0$, the excess is: 
\begin{equation}\label{rcesso}
\mathscr{E}(0)=\sqrt{a^2+b^2}\, \boldsymbol{E}\left(\frac{a}{\sqrt{a^2+b^2}}\right)-\frac{b^2}{\sqrt{a^2+b^2}}\,\boldsymbol{K}\left(\frac{a}{\sqrt{a^2+b^2}}\right)
\end{equation}

\end{teorema}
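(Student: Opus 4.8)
The plan is to evaluate the limit excess directly as a definite integral, bypassing the appeal to the integral tables of \cite{Grr} that were used to obtain \eqref{maccesso}, and instead reducing everything to Legendre normal form by an explicit trigonometric substitution. First I would return to the differential excess \eqref{dDp} and insert the value of $\varepsilon$ from \eqref{lengeps}. Since $e^2=(a^2+b^2)/a^2$ gives $2a\varepsilon=a^2-b^2$, the quartic radicand becomes $a^2b^2+(a^2-b^2)p^2-p^4$, which factors cleanly as $(a^2-p^2)(b^2+p^2)$, so that
\begin{equation*}
{\rm d}\Delta=\frac{-p^2}{\sqrt{(a^2-p^2)(b^2+p^2)}}\,{\rm d}p.
\end{equation*}
As the point $E$ runs from the vertex out to infinity the foot-distance $p$ decreases monotonically from $a$ to $0$ (as already noted before \eqref{dDp}); absorbing the minus sign into the reversed limits, the total excess — the value recorded by the argument $0$ in $\mathscr{E}(0)$, i.e. the lower endpoint $p=0$ — is
\begin{equation*}
\mathscr{E}(0)=\int_0^a\frac{p^2}{\sqrt{(a^2-p^2)(b^2+p^2)}}\,{\rm d}p.
\end{equation*}

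Next I would set $p=a\cos\phi$, so that $a^2-p^2=a^2\sin^2\phi$ and $b^2+p^2=(a^2+b^2)\bigl(1-k^2\sin^2\phi\bigr)$ with the modulus $k=a/\sqrt{a^2+b^2}$ already announced in the statement. The factor $\sqrt{a^2-p^2}=a\sin\phi$ cancels against ${\rm d}p=-a\sin\phi\,{\rm d}\phi$, while the endpoints $p=0,a$ become $\phi=\pi/2,0$; the two resulting sign reversals compensate, and the integral collapses to
\begin{equation*}
\mathscr{E}(0)=\frac{a^2}{\sqrt{a^2+b^2}}\int_0^{\pi/2}\frac{\cos^2\phi}{\sqrt{1-k^2\sin^2\phi}}\,{\rm d}\phi.
\end{equation*}

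The remaining work is the standard reduction of this integral to the complete integrals $\boldsymbol{K}(k)$ and $\boldsymbol{E}(k)$. Writing $\cos^2\phi=1-\sin^2\phi$ and then $\sin^2\phi=k^{-2}\bigl(1-(1-k^2\sin^2\phi)\bigr)$ splits the integral into $\boldsymbol{K}(k)$ and $k^{-2}\bigl(\boldsymbol{K}(k)-\boldsymbol{E}(k)\bigr)$, giving
\begin{equation*}
\int_0^{\pi/2}\frac{\cos^2\phi}{\sqrt{1-k^2\sin^2\phi}}\,{\rm d}\phi=\frac{1}{k^2}\,\boldsymbol{E}(k)-\frac{1-k^2}{k^2}\,\boldsymbol{K}(k).
\end{equation*}
Substituting $1/k^2=(a^2+b^2)/a^2$ and $(1-k^2)/k^2=b^2/a^2$ and multiplying by $a^2/\sqrt{a^2+b^2}$ then reproduces \eqref{rcesso} exactly, recovering \eqref{maccesso} on a self-contained analytic footing. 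I do not expect any genuine obstacle: the computation is elementary once the factorization of the radicand is spotted. The only points demanding care are the bookkeeping of the two sign reversals and the choice $p=a\cos\phi$ rather than $p=a\sin\phi$ — it is precisely this choice that places the modulus on $\sin^2\phi$, delivering $\boldsymbol{K}(k)$ and $\boldsymbol{E}(k)$ in standard form rather than their complementary counterparts.
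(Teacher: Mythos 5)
Your proposal is correct: the factorization $a^2b^2+(a^2-b^2)p^2-p^4=(a^2-p^2)(b^2+p^2)$ is right (since $2a\varepsilon=a^2-b^2$), the substitution $p=a\cos\phi$ does produce the modulus $k=a/\sqrt{a^2+b^2}$ on $\sin^2\phi$, and the standard reduction yields exactly \eqref{rcesso}. However, it is a genuinely different route from the paper's own proof of this theorem. The paper proves Theorem \ref{cess} by a from-scratch cartesian computation: it rotates the axes so that one asymptote becomes vertical, writes the hyperbola as the graph \eqref{ha}, computes the pedal distance $\overline{\rm PE}$ and the arclength integral explicitly as functions of the abscissa $\varepsilon$ of the point of tangency, resolves the $\infty-\infty$ indeterminacy by an integration by parts before letting $\varepsilon\to0^{+}$, and only then evaluates the two surviving elliptic integrals $\mathscr{E}_1,\mathscr{E}_2$ via table entries of \cite{By} and \cite{Grr}. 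Your argument instead takes Maclaurin's differential excess \eqref{dDp} as its starting point, which makes it essentially a fleshed-out, table-free version of the paper's earlier derivation of \eqref{maccesso} in the subsection ``What Maclaurin could not know\ldots'' (there the same integral $\int_0^a p^2\,{\rm d}p/\sqrt{(a^2-p^2)(b^2+p^2)}$ was dispatched by citing entry 3.141-10 of \cite{Grr}; you replace the citation by the explicit Legendre reduction, which is an improvement in self-containedness). What you lose relative to the paper's proof is independence: the whole point of the paper's ``direct approach'' is to verify the excess formula without leaning on Maclaurin's pedal-coordinate derivation of \eqref{dDp}, whereas your proof inherits its validity from that derivation, so it confirms the formula but not as an autonomous cross-check. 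What you gain is brevity, the complete absence of table lookups, and a cleaner treatment of the limit: the $\infty-\infty$ indeterminacy never surfaces, being already absorbed into the convergent integral of the finite excess density over $[0,a]$.
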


\begin{proof} Rotate negatively by $\pi/2-\arctan(b/a)$ the axes so that the asymptote of equation $y=(b/a)x$ will coincided with the vertical axis. Then applying the transformation of coordinates
\[
\begin{cases}
x'=x\cos\left(\arctan\frac{b}{a}-\frac{\pi}{2}\right)-y\sin\left(\arctan\frac{b}{a}-\frac{\pi}{2}\right)\\
y'=x\sin\left(\arctan\frac{b}{a}-\frac{\pi}{2}\right)+y\cos\left(\arctan\frac{b}{a}-\frac{\pi}{2}\right)
\end{cases}
\iff
\begin{cases}
x'=\dfrac{b x + a y}{\sqrt{a^2+b^2}}\\[3mm]
y'=\dfrac{-a x + b y}{\sqrt{a^2+b^2}}
   \end{cases}
\]
changing \eqref{ipxya} in
\begin{equation}\label{ha}
y=\frac{\left(a^2-b^2\right)x^2 +a^2 b^2}{2 a b x}
\end{equation}
and where we will omit $^\prime$. The hyperbola \eqref{ha} vertex is:
\[
{\rm A}=\left(\frac{a b}{\sqrt{a^2+b^2}},\frac{a^2}{\sqrt{a^2+b^2}}\right)
\]
Taken  $\varepsilon>0$ close enough to zero, the straight line touching hyperbola \eqref{ha} at ${\rm E}=(\varepsilon,y(\varepsilon))$ is:
\[\tag{$\tau$}
y=\frac{a b}{\varepsilon }-\frac{x \left(a^2 b^2-\varepsilon ^2
   \left(a^2-b^2\right)\right)}{2 a b \varepsilon ^2}
\] 
so that the normal to $\tau$ going out from the origin has equation:
\[\tag{$\nu$}
y=\frac{2 a b \varepsilon ^2}{a^2 b^2-\varepsilon ^2 \left(a^2-b^2\right)}\,x
\]
and, if ${\rm P}:=\tau\cap\nu,$ then
\[
\overline{{\rm PE}}=\frac{a^4 b^4-\varepsilon ^4 \left(a^2+b^2\right)^2}{2 a b \varepsilon 
   \sqrt{a^4 \left(b^2-\varepsilon ^2\right)^2+2 a^2 b^2 \varepsilon ^2
   \left(b^2+\varepsilon ^2\right)+b^4 \varepsilon ^4}}.
\]
The excess evaluation will be completed by \eqref{ha}:
\[
\sqrt{1+\left(\frac{{\rm d}y}{{\rm d}x}\right)^2}=\frac{\sqrt{\left(a^2+b^2\right)^2x^4 -2 a^2 b^2 \left(a^2-b^2\right)x^2 +a^4 b^4}}{2abx^2}
\]
so that the excess as a function of the abscissa  $\varepsilon$ will be:
\[
\begin{split}
\mathscr{E}(\varepsilon)&=\overline{{\rm PE}}-\overarc[.7]{\rm AE}\\
&=\frac{a^4 b^4-\varepsilon ^4 \left(a^2+b^2\right)^2}{2 a b \varepsilon 
   \sqrt{a^2 b^2-2 a^2 b \varepsilon +a^2 \varepsilon ^2+b^2 \varepsilon ^2}
   \sqrt{a^2 b^2+2 a^2 b \varepsilon +a^2 \varepsilon ^2+b^2 \varepsilon ^2}}\\
   &-\int_{\varepsilon}^{\frac{a b}{\sqrt{a^2+b^2}}}\frac{\sqrt{\left(a^2+b^2\right)^2x^4 -2 a^2 b^2 \left(a^2-b^2\right)x^2 +a^4 b^4}}{2abx^2}\,{\rm d}x.
\end{split}
\] 
\begin{figure}
\begin{center}
\scalebox{.6}{\includegraphics{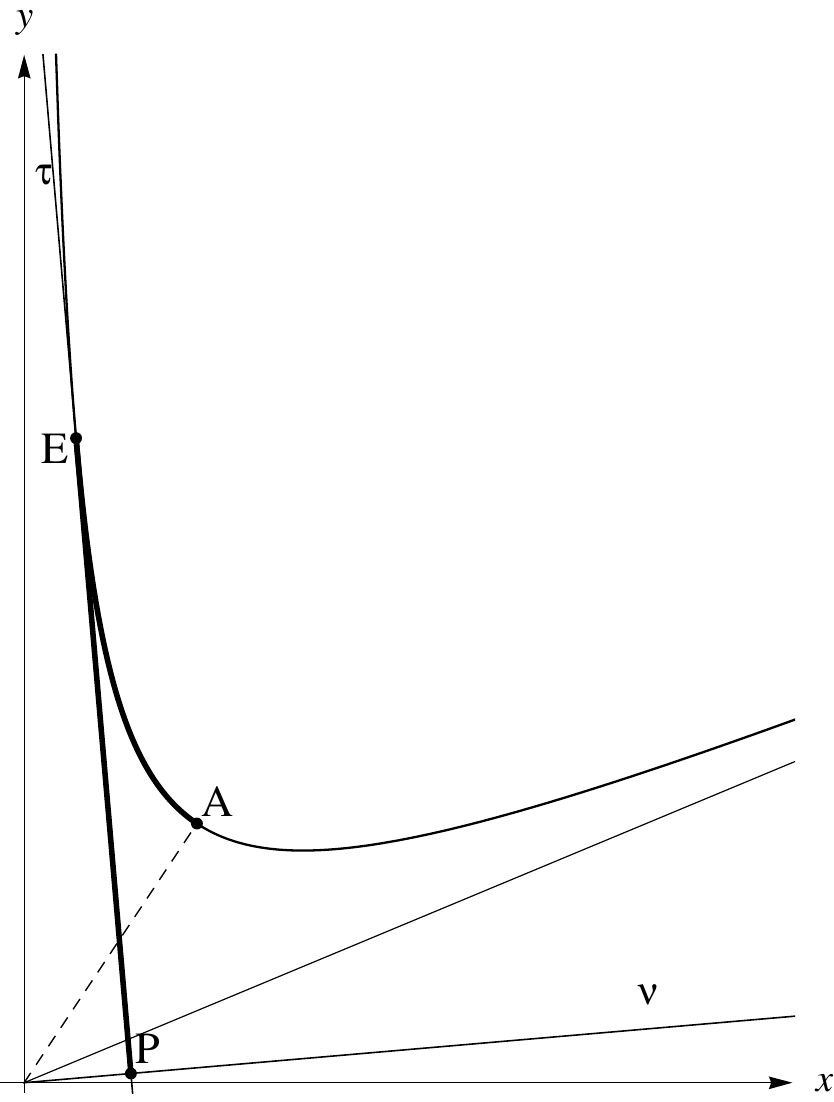}}
\end{center}
\caption{Finite excess =$\overline{{\rm PE}}-\overarc[.7]{\rm AE}$}\label{f24}
\end{figure}
Integrating by parts:
\[
\begin{split}
\overarc[.7]{\rm AE}&=\frac{1}{2ab}\left\{\frac{\sqrt{a^4 b^4+\varepsilon ^4 \left(a^2+b^2\right)^2-2 a^2 b^2 \varepsilon
   ^2 \left(a^2-b^2\right)}}{\varepsilon }-2 a b^2\right.\\
&  \left. +\int_{\varepsilon}^{\frac{a b}{\sqrt{a^2+b^2}}}\frac{2\left(a^2+b^2\right)^2 x^2 -2 a^2 b^2
   \left(a^2-b^2\right)}{\sqrt{\left(a^2+b^2\right)^2x^4 -2 a^2
   b^2 \left(a^2-b^2\right)x^2 +a^4 b^4}}\,{\rm d}x\right\}.
\end{split}
\]
In such a way we get rid of the indetermination, so that, passing to limit for $\varepsilon\to0^{+}$
\begin{equation}\label{cesso:r}
\mathscr{E}(0)=b-\frac{1}{ab}\int_{0}^{\frac{a b}{\sqrt{a^2+b^2}}}\frac{\left(a^2+b^2\right)^2 x^2 - a^2 b^2
   \left(a^2-b^2\right)}{\sqrt{\left(a^2+b^2\right)^2x^4 -2 a^2
   b^2 \left(a^2-b^2\right)x^2 +a^4 b^4}}\,{\rm d}x.
\end{equation}
The excess will be then computed by means of a couple of elliptic integrals:
\begin{align}
\mathscr{E}_1&=\int_{0}^{\frac{a b}{\sqrt{a^2+b^2}}}\frac{\left(a^2+b^2\right)^2 x^2}{\sqrt{\left(a^2+b^2\right)^2x^4 -2 a^2
   b^2 \left(a^2-b^2\right)x^2 +a^4 b^4}}\,{\rm d}x,\\
\mathscr{E}_2&=\int_{0}^{\frac{a b}{\sqrt{a^2+b^2}}}\frac{- a^2 b^2
   \left(a^2-b^2\right)}{\sqrt{\left(a^2+b^2\right)^2x^4 -2 a^2
   b^2 \left(a^2-b^2\right)x^2 +a^4 b^4}}\,{\rm d}x.
\end{align}
Using in sequence \cite{By} entry 361.53 page 215 and 361.53 page 215, we get
\[
\mathscr{E}_1=\frac{ab\sqrt{a^2+b^2}}{2}\left[\boldsymbol{K}\left(\frac{a}{\sqrt{a^2+b^2}}\right)-2\boldsymbol{E}\left(\frac{a}{\sqrt{a^2+b^2}}\right)+\frac{2 b}{\sqrt{a^2+b^2}}\right].
\]
In order to compute $\mathscr{E}_2$ invoking entry 3.138-7 page 259 of \cite{Grr} we get
\[
\mathscr{E}_2=-\frac{a b \left(a^2-b^2\right)}{2 \sqrt{a^2+b^2}}\,\boldsymbol{K}\left(\frac{a}{\sqrt{a^2+b^2}}\right)
\]
Combining $\mathscr{E}_1$ with $\mathscr{E}_2$ we obtain \eqref{cesso:r}.
\end{proof}
\subsection{Comparison with Landen's formula}
Landen shows synthetically the hyperbola of equation
\[
\frac{x^2}{(m-n)^2}-\frac{y^2}{4mn}=1
\]
has an excess given by the difference between the double of quadrantal arc of the ellipse of equation
\[
\frac{x^2}{(m+n)^2}+\frac{y^2}{4mn}=1
\]
and of the quadrantal arc of the ellipse of equation
\[
\frac{x^2}{m^2}+\frac{y^2}{n^2}=1.
\]
This can be done in modern notation by means of complete elliptic integrals:\begin{equation}\label{lcesso}
\mathscr{L}=2m\, \boldsymbol{E}\left(\sqrt{\frac{m^2-n^2}{n^2}}\right)-(m+n)\, \boldsymbol{E}\left(\frac{m-n}{m+n}\right).
\end{equation}
The task will be to prove two determinations of the excess, namely that \eqref{rcesso} and \eqref{lcesso} lead to the same thing. The first step is to express the coefficients $m$ and $n$ by means of the semiaxes $a$ and $b$
\[
\begin{cases}
m-n=a\\
4mn=b^2
\end{cases}
\implies
\begin{cases}
m=\dfrac{1}{2} \left(\sqrt{a^2+b^2}+a\right)\\[2mm]
n=\dfrac{1}{2} \left(\sqrt{a^2+b^2}-a\right)
\end{cases}
\]
Then, by equating \eqref{rcesso} and \eqref{lcesso} we have the identity 
\begin{equation}\label{iid}
\begin{split}
&2\sqrt{a^2+b^2}\,
   \boldsymbol{E}\left(\frac{a}{\sqrt{a^2+b^2}}\right)-\left(\sqrt{a^2+b^2}+a\right)
   \boldsymbol{E}\left(\frac{2 \sqrt{a\sqrt{a^2+b^2}}}{a+\sqrt{a^2+b^2}}\right)=\\
  & \frac{b^2}{\sqrt{a^2+b^2}}\,\boldsymbol{K}\left(\frac{a}{\sqrt{a^2+b^2}}\right).
   \end{split}
\end{equation}
To check that \eqref{iid} is true, we will use theorem 1.2 (c) page 12 of \cite{Borweins} where is proved that:
\begin{equation}\label{Borw}
\boldsymbol{E}(k)=\frac{1+k}{2}\boldsymbol{E}\left(\frac{2\sqrt{k}}{1+k}\right)+\frac{1-k^2}{2}\boldsymbol{K}(k).
\end{equation}
Now, dividing both sides of \eqref{iid} by $\sqrt{a^2+b^2}$ and substituting 
$$
k=\frac{a}{a^2+b^2}
$$
provided that
\[
\frac{2 \sqrt{a\sqrt{a^2+b^2}}}{a+\sqrt{a^2+b^2}}=\frac{2\sqrt{k}}{1+k},\quad \frac{b^2}{a^2+b^2}=1-k^2
\]
which is \eqref{Borw}: namely the excess obtained through the analytical argument of theorem \ref{cess} coincides with Landen's synthetic excess.  
Of course we could change the viewpoint and by two excess representations infer that formula \eqref{Borw} of \cite{Borweins} is true.
 \subsection{Did Landen really create the {\it Landen transformation}?}
 
Landen is often ignored in several historical works. For instance, M. Kline (1908-1992) \cite{Kline}, in all of 1500 pages for 24 centuries of mathematicians, never once mentioned Landen's name. The same with \cite{Katz}, 800 pages. An
exception is G. Loria (1862-1954), \cite{Loriast} who covered  Landen's contribution with 26 lines in total, but where the \lq\lq elliptic'' contribution is misunderstood,
made the common attribution's mistake regarding elliptic integrals, as we will see.
Furthermore Landen is often
quoted carelessly and mostly for the \textit{Residual Analysis}, rather than
for his contributions to the hyperbola or to his (almost unknown) differential solution of algebraic equations. On the contrary, Florian Cajori (1859-1930), \cite{Cajo2}, when describing almost all treatises or mathematics pamphlets, polemics and improvements, during a restricted range of time of the British mathematics, focused for a while on Landen's historical bearing, writing appropriately although not entering analytical details.

Finally, several authors describe Landen's contributions regarding the elliptic integrals, contributions which do not exist. For instance in \cite{Arm} p. 99 formula (4.54) we read:
\begin{quote}
In 1775, Landen gave the formula
\[
\int_{0}^{\phi_1}\left(1-k_{1}^{2}\sin^{2}\theta_{1}\right)^{-1/2}{\rm d}\theta_{1}=(1+k')\int_{0}^{\phi}\left(1-k^{2}\sin^{2}\theta\right)^{-1/2}{\rm d}\theta
\]
where $\sin(\phi_{1})=(1+k')\sin(\phi)cos(\phi)(1-k^{2}\sin^{2}(\phi) )^{-3/2}$
and $k_1=(1-k')/(1+k')$. His proof is to be found in the Philosophical Transactions of the Royal Society, LXV, (1775), page 285.
\end{quote}
and \cite{Mit} had written:

\begin{quote}
Euler's addition theorem and the transformation theory of Landen and
Lagrange were the two fundamental ideas of which the theory of elliptic
function was in possession when this theory was brought up for renoved
consideration by Legendre in 1786.
\end{quote}

Both statements are wrong. Such a transformation was really initiated, unconsciously, by Lagrange, as we shall see. Mistaken references  like these can be found in many treatises and/or articles on elliptic functions: their authors probably did not check on the original Landen sources. Some of them have a better criticism but not always clear ones. A viewpoint we agree with is \cite{Guicc}:
\begin{quote}
Landen's problem was not that of integrating functions of the form
\[
\left[\left(1-x^{2}\right)\left(1-q^{2}x^{2}\right)\right]^{-1/2}
\] 
and he never expressed Landen's transformations in the form known today.
\end{quote}
Accordingly \cite{Cay}, in a rather cautious note writes:

\begin{quote}
Landen's capital discovery is that of the theorem known by his name (obtained in its complete form in the memoir of 1775, and reproduced in the first volume of the Mathematical Memoirs) for the expression of the arc of an hyperbola in terms of two elliptic arcs. To find this, he integrates a differential equation derived from the equation
$$
t=gx \sqrt{\frac{m^{2}-x^{2}}{m^{2}-g^{2}x^{2}}}
$$
interpreting geometrically in an ingenious and elegant manner three integrals which present themselves. If, in the foregoing equation we write $m = 1$, $g = k^{2}$, and instead of $t$ consider the new variable $y = \frac{t}{1-k'}$, then
$$
t=(1+k')x\sqrt{\frac{1-x^{2}}{1-k^{2}x^{2}}}
$$which is the form known as Landen's transformation in the theory of elliptic functions; but his investigation does not lead him to obtain the equivalent of the resulting differential equation
$$
\frac{{\rm d}y}{\sqrt{(1-y^{2})(1-\lambda^{2}y^{2})}}=(1+k')\frac{{\rm d}x}{\sqrt{(1-x^{2})(1-k^{2}x^{2})}}
$$
where	
$$
\lambda=\frac{1-k'}{1+k'}
$$
due it would appear to Legendre and which (over and above Landen's own beautiful result) gives importance to the theorem as leading directly to the quadric transformation of an elliptic integral in regard to the modulus.
\end{quote}
The same caution is shown by some historians like \cite{Cooke} pages 529-539, who affirm the so-called transformation of elliptic integrals to be embedded \textit{inside} the Landen theorem. This untrue, because \textit{referring to} his paper of 1775, there is no match of such a claim; \cite{Greenn} write wisely:
\begin{quote}
While his interest and application in these directions were acute, Landen failed to realize that the whole of his analytical transformations were particular cases of one general one, now known as the Landen transformations.
\end{quote}
\cite{Smadja}, is ascribing the origin of such a distortion to Legendre, who credits Landen with the autorship of transformation, in such a way lessening the merits of Maclaurin, D'Alembert, and of Lagrange too\footnote{Lagrange appreciated Landen, even if he did not cite his work on the hyperbola in the Turin memoria. On the contrary Legendre did not cite Lagrange but Landen, asking himself why Euler never wrote anything on
 Landen whilst, in due time in 1751, had referred to Fagnano. His conclusion is that Euler probably ignored Landen's papers.}. But Landen could not have had the slightest idea of handling those mathematical objects: module, amplitude, and several kinds of irrational integrals which were to be classified by Legendre many years after him. Furthermore Landen could not have had a clear understanding on the relationship between his ellipses as a transformation capable of mapping two sets of coefficients, but keeping an elliptic integral invariant. 
Before the role of Lagrange is shown in the next section, let us try to fix a few points here.

The first to establish a change of variables capable of generating a recursive algorithm in order to compute a particular elliptic integral, was Lagrange in a paper \cite{Lag}, a few years after Landen's. But Legendre, see for instance \textit{Remarque
g\'enerale} page 87 of \cite{Leg1} implemented (ibidem, p. 84) a purely analytical method for rectifying  the hyperbola by means of  elliptic integrals. At some point he introduces
in the integral: 
\[
F(\varphi,k ):=\int_{0}^{\varphi }\frac{\mathrm{d}\theta }{\sqrt{1-k^{2}\sin
^{2}\theta }}
\]
a trigonometric transformation: $\varphi \to \hat{\varphi},\, k\to q$, namely:\footnote{Later, concerning the role of Lagrange, we will guess a possible mathematical path for arriving at this transformation starting from Lagrange's statements. Of course this will show the feasibility of such a path, but nobody could tell which route Legendre took actually.}
\begin{equation}
\sin \left( 2\hat{\varphi} -\varphi \right) =k\sin \varphi,\quad\frac{2\sqrt{k}}{1+k}=q \label{AA2}
\end{equation}
and after this he establishes the relationship:
\begin{equation}
F(\varphi,k)=\frac{2}{1+k}F\left( \hat{\varphi} ,\frac{2\sqrt{k}}{1+k}\right) \label{AA9}
\end{equation}
by Richelot (1808-1875) named \cite{ric}, 
\textit{Legendre Gleichung}, being the new amplitude $\hat{\varphi}$
computed by the old one $\varphi$ by means of \eqref{AA2}.
Nevertheless \eqref{AA9} is referred as \textit{Module Amplitude Transformation}.
About which let us cite again \cite{Cay}:
\begin{quote}
The trigonometrical form [\ldots] does not occur in Landen; it is employed by Legendre, I believe, in an early paper,  \textit{M\'em. de l'Acad. de Paris}, 1786, and in the \textit{Exercices}, 1811, and also in the \textit{Trait\`{e} des Fonctions Elliptiques}, 1825, and by means of it obtains an expression for the arc of hyperbola in terms of two elliptic functions $E(c,\phi),\, E(c',\phi ')$, showing that the arc of the hyperbola is expressible by means of two elliptic arcs, \lq\lq le beau th\'eor\`{e}me dont Landen a enrichi la g\'eom\'etrie''.
\end{quote}
This explains why a \textit {transformation of Lagrange-Legendre} has its own life apart from the \lq\lq theorem of Landen''.
The \cite{Smadja} page 389 recent reconstruction, 
\begin {quote}
Dans le cas qui nous occupe, il est tout aussi l\'egitime de regarder le th\'eor\`{e}me de Landen comme un aboutissement que comme un commencement, selon qu'on l'envisage dans le contexte des recherches de Maclaurin et de d'Alembert qu'il compl\`{e}te et perfectionne, ou dans le contexte correspondant \`{a} la lecture de Legendre. Vouloir absolument trancher, de mani\`{e}re d\'econtextualis\`{e}e, la question de savoir qui de Landen, Lagrange ou Legendre est \lq\lq le premier'' \`{a} d\'egager la transformation de Landen est vain et illusoire
\end{quote}
is the best description of what we deem false.
We take the opportunity to highlight almost all the theorems about elliptical objects attributed to Landen (including e.g. the jacobian Theta functions), by treatises on Calculus like: \cite{WW}; \cite{Bell}; \cite{dur}; \cite{boros}; \cite{Mol}; \cite{Mit}; \cite{San}; \cite{Cas} and so on, are \textit{lacking of any historical value}. As a matter of fact none of formulae referred to in this section was ever written by Landen. A further distortion is provided in \cite{Manna}, page 289 equations (1-17) and (1-18), where Landen is mistakenly credited with deriving some elliptic integrals identities.

\section{Lagrange}
\subsection{The \textit{Nouvelle m\'{e}thode}}
In volume II (1785) of Memoirs of the Turin
Royal Academy we can read Lagrange's long paper 
entitled: \textit{Nouvelle m\'{e}thode de calcul integral pour les
diff\'{e}rentielles affect\'{e}es d'un radical carr\'{e} sous lequel la
variable ne passe pas le quatri\'{e}me degr\'{e}} and where he is concerned with elliptic integrals, namely of rational functions of  $x$ and of the square root of a fourth degree polynomial without multiple roots. He was perfectly aware of the problem's peculiarity:
\begin{quote}
si la plus haute de ces puissances ne monte pas au del\`{a} du
quatri\`{e}me degr\'{e} on peut dans plusieurs cas construire
l'int\'{e}grale par les arcs des s\'{e}ctions coniques.
\end{quote}
which is of poor help in trying to compute them:
\begin{quote}
mais il n'est d'aucune utilit\'{e} pour l'integration effective de
ces differentielles, car la rectification des sections coniques n'est encore
connue que tr\'{e}s-imparfaitement,
\end{quote}
so that the series expansion is, after all:
\begin{quote}
le seul moyen de rappeler \`{a} l'integration toutes les formules
differentielles d'une forme essentiellement irrationelle, 
\end{quote}
whose truncation error can be reduced ad libitum by taking a greater number of the terms expansion. 

Lagrange considers elliptic integrals whose integrand has the form
$$P(x)=M(x)+\frac{N(x)}{\sqrt{a+bx+cx^{2}+ex^{3}+fx^{4}}},$$ 
being $M(x),\,N(x)$ rational functions of $x$, so that the differential to be integrated will be split  in a rational term $M(x)$ integrable through logarithms and arcs of circle, plus an irrational term on which Lagrange concentrated his effort. By means of algebraic transformations he proved
that this term can be split in two terms, rational and irrational, the last being:
\[
\frac{Q(y^{2})}{\sqrt{\varepsilon +\zeta y^{2}+\eta y^{4}}} 
\]
where $\varepsilon ,\zeta ,\eta $ are constants and $Q$ a rational function of  $y^{2}$. 
Next, he showed his method requires only the trinomial $\varepsilon
+\zeta y^{2}+\eta y^{4}$ can be broken in two binomial like $\alpha +\beta 
y^{2},\,\gamma +\delta y^{2}$ being $\alpha,\, \beta,\, \gamma,\,\delta $ real quantities. Now the problem was: how to integrate such a differential. He
succeeded in reducing it to: 
\begin{equation}
\frac{\text{d}y}{\sqrt{\left( 1\pm p^{2}y^{2}\right) \left( 1\pm
q^{2}y^{2}\right) }}  \label{ZZ}
\end{equation}
where $p$ and $q$ are real quantities,  $p>q$, so that the square root is certainly real. Formula \eqref{ZZ} is the starting point  for the arithmetic-geometric transformation that led Lagrange to a particular approach in order to transform the elliptic differentials, but which he did not draw the conclusions from. Our next section will be devoted to this. Let us note briefly that he, from \eqref{ZZ} through successive reductions, arrives at:
\[
\frac{{\rm d}z}{\sqrt{(b^{2}\pm z^{2})^{2}-\beta ^{2}}} 
\]
Expanding the binomial series, and by a term by term integration, he got an infinite series of integrals of rational functions. And then:
\begin{quote}
Est donc assur\'{e} de pouvoir integrer aussi exactement qu'on
voudra toute differentielle affect\'{e} d'un radical carr\'{e} o\`{u} la
variable sous le signe monte jusqu'\`{a} la quatri\`{e}me puissance; ce qui
est le cas d'un grand nombre de prob\`{e}mes g\'{e}ome\'{e}triques et
m\'{e}caniques.
\end{quote}
Such a triumphalism is over: no one, having to evaluate an elliptic integral, with some success would have hoped to complete and
control such a calculation: Lagrange used algebraic techniques to break the 4$^{\rm th}$ degree polynomial in the root, to avoid Ferrari's formul\ae, and complicated root calculations. Lagrange of course realized that the difficult
process should be put
to the test:
\begin{quote}
Comme cette m\'{e}thode est d'un genre assez nouveau, et qu'on
pourrait rencontrer encore quelques difficult\'{e}s dans son usage, nous
allons l'appliquer en d\'{e}tail \`{a} la rectification des arcs elliptiques
et hyperboliques.
\end{quote}
When the eccentricity is very small the elementary elliptic arc can be integrated through a convergent series of even powers of the eccentricity itself, but when it becomes 
\begin{quote}
peu differente
de l'unit\'{e}, ce qui est le cas d'une ellipse ou d'une hyperbole tr\`{e}s
aplatie
\end{quote}
the things become much involved, and even worst if 
\begin{quote}
nous allons appliquer notre m\'{e}thode g\'{e}n\'{e}rale \`{a} la
rectification d'une ellipse e d'une hyperbole quelconque. 
\end{quote}
Lagrange then went on through 30 intricate pages: its complicated approach cannot be expressed by any formula, but by several ones,
holding several parameters often stemming from series expansions. \subsection{The birth of the Arithmetic Geometric Mean}
At this point he constructed a sequence of arithmetic means and a
sequence of geometric means as follows, fix $p=p_0>q=q_0$ and iterate:
\[
p_{n}=\frac{p_{n-1}+q_{n-1}}{2},\quad q_{m}=\sqrt{p_{n-1}q_{n-1}}. 
\]
After this he carried out algebraic arguments concluding that, following the assumptions, the sequence $(p_{n})$ decreases while $(q_{i})$ increases and $q_n<p_n$. The convergence is very fast: a Gauss's example \cite{Gauss} shows that
if for instance $p_0=1$ and $q_0=0.8$, at the third step, the values of $p_{3} 
$ and $q_{3}$ are different since the 12$^{\rm th}$ digit on. Sequences $(
p_{n}) $ and $(q_{n})$ converge to a common limit denoted by $M(p,q)$, say the Arithmetic-Geometric Mean between $p$ and $q$. An extensive treatment of the AGM can be found at \cite{Cox} and \cite{Borweins} and the references therein. Gauss discovered, through the AGM introduced by Lagrange, a way for
computing the elliptic integrals. He provided the arclength $L$ of the
lemniscate, of equation $(x^{2}+y^{2})^{2}=a^{2}(x^{2}-y^{2}),$ as:
\begin{equation}\label{gauslem}
L=4a\int_{0}^{1}\frac{dt}{\sqrt{1-t^{4}}}=\frac{2\pi a}{M\left( 1,\sqrt{2}
\right) } 
\end{equation}
so that nowadays the number $1/M\left(1,\sqrt{2}\right)$ is known as the \textit{Gauss lemniscatic constant}. For Gauss' proof of \eqref{gauslem} see 
\cite{Cox}.

The Arithmetic Geometric Mean was first set forth in the Turin memoir \cite{Lag}
published in 1785\footnote{In 1799 or in 1800 Gauss wrote a paper
(appeared in 1866) describing his many discoveries on the Arithmetic Geometric Mean on which he
had started to work (aged 14) since 1791, as by himself confided to his
friend Schumacher in a letter dated April 16$^{\rm th}$, 1816. On the third volume of his works \cite{Gauss} we can read four entries on the subject.\textit{We are a bit astonished that almost all 
authors ignore the Lagrange's priority,}
assuming Arithmetic Geometric Mean as detected by Gauss. The only valuable exception to this 
wrong
course is due to Almkvist and Berndt \cite{AB}. Unfortunately neither they eluded the other wrong  trend concerning the so-called \lq\lq Landen
transformation'', probably for not having had access to Landen's 1775 paper.}. Afterwords Lagrange proceeded from variable $y$ of \eqref{ZZ},
to $y_{1}$, and from $y_{1}$ to $y_{2}$, and so on:
\[
y=\frac{y_{1}R_{1}}{1\pm q_{1}^{2}y_{1}},\,y_{1}=\frac{y_{2}R_{2}}{1\pm
q_{2}^{2}y_{2}},\,\ldots\,,y_{i}=\frac{y_{i+1}R_{i+1}}{1\pm q_{i+1}^{2}y_{i+1}}, 
\]
being $R_{i}=\sqrt{\left( 1\pm p_{i}^{2}y_{i}^{2}\right) \left( 1\pm
q_{i}^{2}y_{i}^{2}\right) },\quad i=1,\,2,\,\dots,n.$ Next he gave $y_{i+1}^{2}$ as a function of $y_{i}$
\[
y_{i+1}^{2}=\frac{\pm q_{i+1}^{2}y_{i}^{2}-1+R_{i}}{\pm 2p_{i+1}^{2}} 
\]
and established the differential relationships:
\begin{equation}
\frac{dy}{R}=\frac{dy_{1}}{R_{1}}=\frac{dy_{2}}{R_{2}}=\cdots  \label{FF}
\end{equation}
namely: the joint variable transformation on $p$,$q$, and on the variable $y$ of integration kept invariant the elliptic differential \eqref{ZZ}. Notice that Lagrange did not stand much on the differential identities \eqref{FF}
whose  genesis is not explained at all. Soon after he went on other subjects. Exactly at this point we can show that, starting from \eqref{FF} one arrives at the famous Legendre
identity.

\subsection{From Lagrange differential identity to Legendre's formula}
This section is devoted to fill a gap: Lagrange did not complete his work but Legendre somehow succeeded in providing the famous transformation. What could have happened in the meantime? We do not really know, but we succeeded in filling such a gap by introducing what Legendre \textit{could have done}. 
Given $0<q<p,$ let us start by analyzing in  \eqref{ZZ} the case with negative signs
\[
\int \frac{\mathrm{d}y}{\sqrt{(1-p^{2}y^{2})(1-q^{2}y^{2})}}. 
\]
The positivity ranges for the expression under root, where one wishes to work, lead to 
\[
(1-p^{2}y^{2})(1-q^{2}y^{2})\geq 0\iff y\in \left( -\infty ,-1/q\right] \cup
\left[ -1/p,1/p\right] \cup \left[ 1/q,\infty \right) ; 
\]
and then we are going to consider the definite integral 
\begin{equation}
I(p,q):=\int_{0}^{1/p}\frac{\mathrm{d}y}{\sqrt{(1-p^{2}y^{2})(1-q^{2}y^{2})}}
\label{defmeno}
\end{equation}
by introducing functions' family
\[
R_{p,q}(y):=\sqrt{(1-p^{2}y^{2})(1-q^{2}y^{2})} 
\]
and $p_{1}<p$ and  $q_{1}>q$ with $p_{1}>q_{1},$ a new integration variable
$y_{1}$ in function of $y$ is introduced: 
\[
y=\frac{y_{1}}{1-q_{1}^{2}\,y_{1}^{2}}\,R_{p_{1},q_{1}}(y_{1}), 
\]
or: 
\begin{equation}
y=y_{1}\sqrt{\frac{1-p_{1}^{2}y_{1}^{2}}{1-q_{1}^{2}y_{1}^{2}}}
\label{stremo}
\end{equation}
Let us work inside the set where $1-p_{1}^{2}y_{1}^{2}>0$ and  $
1-q_{1}^{2}y_{1}^{2}>0$. Converting the former differential in terms of the new variable $y_{1}$%
\[
\mathrm{d}y=\frac{p_{1}^{2}q_{1}^{2}y_{1}^{4}-2p_{1}^{2}y_{1}^{2}+1}{\left(
1-q_{1}^{2}y_{1}^{2}\right) \sqrt{\left( 1-p_{1}^{2}y_{1}^{2}\right) \left(
1-q_{1}^{2}y_{1}^{2}\right) }}\,\mathrm{d}y_{1} 
\]
so that  the integral \eqref{defmeno} is changed in: 
\[
\int_{0}^{s(p,q)}\frac{p_{1}^{2}q_{1}^{2}y_{1}^{4}-2p_{1}^{2}y_{1}^{2}+1}{%
\sqrt{\left( 1-p_{1}^{2}y_{1}^{2}\right) \left( 1-q_{1}^{2}y_{1}^{2}\right)
\left( p^{2}p_{1}^{2}y_{1}^{4}-(p^{2}+q_{1}^{2})y_{1}^{2}+1\right) \left(
p_{1}^{2}q^{2}y_{1}^{4}-(q^{2}+q_{1}^{2})y_{1}^{2}+1\right) }}\,\,\mathrm{d}%
y_{1} .
\]
Now, assuming $p_{1}$ and $q_{1}$ to come by the above iteration of arithmetic-geometric type,
\[
\begin{cases}
p_1=\dfrac{p+q}{2}\\
q_1=\sqrt{pq}
\end{cases}
\iff 
\begin{cases}
p=p_1+\sqrt{p_1^2-q_1^2}\\
q=p_1-\sqrt{p_1^2-q_1^2}
\end{cases}
\]
we succeed in simplifying
\[
\begin{split}
&\int_{0}^{s(p,q)}\frac{p_{1}^{2}q_{1}^{2}y_{1}^{4}-2p_{1}^{2}y_{1}^{2}+1}{%
\sqrt{\left( 1-p_{1}^{2}y_{1}^{2}\right) \left( 1-q_{1}^{2}y_{1}^{2}\right)
\left( p_{1}^{2}q_{1}^{2}y_{1}^{4}-2p_{1}^{2}y_{1}^{2}+1\right) ^{2}}}\,\,%
\mathrm{d}y_{1}=\\
&\int_{0}^{s(p,q)}\frac{\mathrm{d}y_{1}}{\sqrt{\left(
1-p_{1}^{2}y_{1}^{2}\right) \left( 1-q_{1}^{2}y_{1}^{2}\right) }} 
\end{split}
\]
where the polynomial $
p_{1}^{2}q_{1}^{2}y_{1}^{4}-2p_{1}^{2}y_{1}^{2}+1$ results to be positive for $%
y_{1}\in [0,s(p,q)].$ In such a way we obtained in integral form Lagrange's differential identity \eqref{FF}. The upper integration limit $s(p,q)$ is obtained
by putting $y=p$ in \eqref{stremo}, providing $p_{1}$ and $q_{1}$ as a function of $p$ and $q$ 
(Arithmetic Geometric Mean) and solving to $y_{1}$: 
\[
s(p,q)=\sqrt{\frac{2}{p(p+q)}}=\frac{1}{\sqrt{p_{1}\sqrt{p_{1}^{2}-q_{1}^{2}}%
+p_{1}^{2}}} .
\]
The case of positive signs is analogous, and really more simple, not needing a signs discussion. 
Being $0<q<p$, taking $0<x<1/p$, it is true that
\begin{equation}
\int_{0}^{x}\frac{\mathrm{d}y}{\sqrt{(1-p^{2}y^{2})(1-q^{2}y^{2})}}%
=\int_{0}^{s(x,p,q)}\frac{\mathrm{d}y_{1}}{\sqrt{%
(1-p_{1}^{2}y_{1}^{2})(1-q_{1}^{2}y_{1}^{2})}}  \label{glei}
\end{equation}
with
\[
s(x,p,q)=\frac{\sqrt{2}}{p+q}\,\sqrt{1+pqx^{2}-\sqrt{\left(
1-p^{2}x^{2}\right) \left( 1-q^{2}x^{2}\right) }}
\]
and if $x=1/p$ then $s(1/p,p,q)=s(p,q)$.
Notice that
the expression for $s(p,q)$  is more simple than $s(x,p,q)$
being the value $y=1/p$ the absolute maximum attained by the function used by Lagrange in \eqref{stremo}: 
\[
y=y_{1}\sqrt{\frac{1-p_{1}^{2}y_{1}^{2}}{1-q_{1}^{2}y_{1}^{2}}}=y_{1}\sqrt{%
\frac{1-\frac{1}{4}y_{1}^{2}(p+q)^{2}}{1-pqy_{1}^{2}}}:=f(y_1).
\]
Noting that
\[
\frac{q_{1}}{p_{1}}=\frac{2\sqrt{pq}}{p+q}=\frac{2\sqrt{k}}{1+k}:=\hat{k},\quad
\frac{p}{p_{1}}=\frac{2}{1+k} 
\]
putting $k:=q/p$ and making use of homothetics $z=p\,y,\,z_{1}=p_{1}\,y_{1}$ at both sides of \eqref{glei}, we get:
\begin{equation}
\int_{0}^{px}\frac{\mathrm{d}z}{\sqrt{(1-z^{2})(1-k^{2}z^{2})}}=\frac{2}{1+k}%
\int_{0}^{p_{1}\,s(x,p,q)}\frac{\mathrm{d}z}{\sqrt{(1-z_{1}^{2})(1-\hat{k}
^{2}z_{1}^{2})}}  \label{gleichung}
\end{equation}
published for the first time on at page 7 of the first volume in his
{\it Trait\'{e}} and by Jacobi and Richelot named as \textit{Legendre Gleichung}. This fact, never observed before, strengthens the idea one shall refer to a \lq\lq Lagrange-Legendre transformation'' apart from the hyperbolic \lq\lq Landen theorem''. Let us see the relationship between the amplitudes in two consecutive stages of iteration. For that of the integral at left hand side of \eqref{gleichung} one finds:
 $\sin \varphi =px,$ and
\begin{equation}
\sin \hat{\varphi}=p_{1}\,s(x,p,q)=\sqrt{\frac{1+pqx^{2}-\sqrt{\left(
1-p^{2}x^{2}\right) \left( 1-q^{2}x^{2}\right) }}{2}}  \label{hat}
\end{equation}
defines that of the integral at right hand side of  in \eqref{gleichung}. The wanted link between $\varphi $ and $\hat{\varphi}$ is provided by:

\begin{teorema}
The Lagrange's AGM trasformation on a first kind elliptic integral $F(k,\varphi )$ changes its parameters $k,\varphi $,
defining a new amplitude $\hat{\varphi}$ such that\footnote{Notice that the relationship between the amplitudes ${\varphi}$ and $\hat{\varphi}$  as in the first of \eqref{AA2} is the same thing as \eqref{landen}.}: 
\begin{equation}
\tan \varphi =\frac{\sin (2\hat{\varphi})}{k+\cos (2\hat{\varphi})} .
\label{landen}
\end{equation}
\end{teorema}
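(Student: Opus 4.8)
The plan is to eliminate $x$ and $\hat{\varphi}$ from the two defining relations established just before the statement---namely $\sin\varphi=px$ for the left-hand amplitude in \eqref{gleichung}, and equation \eqref{hat} for $\sin\hat{\varphi}$---rewriting everything in terms of $\varphi$ and $k:=q/p$, and then verifying \eqref{landen} by direct double-angle manipulation. The identity is purely trigonometric once the substitution is made, so no further analytic input is needed.

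First I would substitute. From $\sin\varphi=px$ we get $p^{2}x^{2}=\sin^{2}\varphi$, hence $1-p^{2}x^{2}=\cos^{2}\varphi$; and since $k=q/p$ we have $q^{2}x^{2}=k^{2}\sin^{2}\varphi$ and $pqx^{2}=k\sin^{2}\varphi$. The radical inside \eqref{hat} therefore collapses to
\[
\sqrt{(1-p^{2}x^{2})(1-q^{2}x^{2})}=\cos\varphi\,\sqrt{1-k^{2}\sin^{2}\varphi},
\]
the positive determination being the correct one on the working range (both amplitudes in the first quadrant). Thus \eqref{hat} becomes
\[
\sin^{2}\hat{\varphi}=\frac{1+k\sin^{2}\varphi-\cos\varphi\sqrt{1-k^{2}\sin^{2}\varphi}}{2}.
\]

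Next I would compute the two pieces of \eqref{landen} separately. Applying $\cos(2\hat{\varphi})=1-2\sin^{2}\hat{\varphi}$ gives the compact form
\[
\cos(2\hat{\varphi})=\cos\varphi\sqrt{1-k^{2}\sin^{2}\varphi}-k\sin^{2}\varphi,
\]
so that, adding $k$ and using $1-\sin^{2}\varphi=\cos^{2}\varphi$, the denominator factors as
\[
k+\cos(2\hat{\varphi})=\cos\varphi\left(k\cos\varphi+\sqrt{1-k^{2}\sin^{2}\varphi}\right).
\]
For the numerator I would evaluate $\sin^{2}(2\hat{\varphi})=4\sin^{2}\hat{\varphi}\cos^{2}\hat{\varphi}$ (or equivalently $1-\cos^{2}(2\hat{\varphi})$); after expansion the result collects into the perfect square
\[
\sin^{2}(2\hat{\varphi})=\sin^{2}\varphi\left(k\cos\varphi+\sqrt{1-k^{2}\sin^{2}\varphi}\right)^{2},
\]
whence $\sin(2\hat{\varphi})=\sin\varphi\bigl(k\cos\varphi+\sqrt{1-k^{2}\sin^{2}\varphi}\bigr)$. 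Forming the quotient, the common factor $k\cos\varphi+\sqrt{1-k^{2}\sin^{2}\varphi}$ cancels and what survives is precisely $\sin\varphi/\cos\varphi=\tan\varphi$, which is \eqref{landen}.

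The only delicate point---and the one I expect to be the main obstacle---is the bookkeeping of square-root signs when passing from the squared identity for $\sin(2\hat{\varphi})$ back to the unsquared one. One must confirm that on the relevant amplitude ranges, dictated by the integration limits in \eqref{gleichung} together with the monotone increase of the map $f(y_{1})$ used by Lagrange, both $\varphi$ and $2\hat{\varphi}$ lie in the first quadrant, so that the positive determinations of $\cos\varphi$, of $\sqrt{1-k^{2}\sin^{2}\varphi}$, and of $\sin(2\hat{\varphi})$ are simultaneously the correct ones and no spurious sign is introduced. Everything else reduces to routine algebra.
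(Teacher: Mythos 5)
Your proof is correct and is in substance the same as the paper's: both verify \eqref{landen} by substituting the expression \eqref{hat} for $\sin\hat{\varphi}$ into $\sin(2\hat{\varphi})/(k+\cos(2\hat{\varphi}))$ and reducing that quotient to $\tan\varphi$, with both amplitudes confined to the first quadrant so that the positive square-root determinations are the right ones. The only difference is one of bookkeeping: the paper carries out the reduction directly in the variables $(p,q,x)$ and asserts the final simplification of the resulting radical expression, whereas you first normalize to $(k,\varphi)$ via $\sin\varphi=px$, $k=q/p$, which makes the perfect square $\sin^{2}(2\hat{\varphi})=\sin^{2}\varphi\bigl(k\cos\varphi+\sqrt{1-k^{2}\sin^{2}\varphi}\bigr)^{2}$ and the cancelling common factor explicit --- a cleaner rendering of the same computation.
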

\begin{proof} It is well known that $\varphi $ and $\hat{\varphi}$
are ranged between 0 and $\pi /2$ and then 
\[
\tan \varphi =\frac{\sin \varphi }{\sqrt{1-\sin ^{2}\varphi }}=\frac{px}{%
\sqrt{1-p^{2}x^{2}}}. 
\]
On the other side
\[
\frac{\sin (2\hat{\varphi})}{k+\cos (2\hat{\varphi})}=\frac{2\sin \hat{%
\varphi}\sqrt{1-\sin ^{2}\hat{\varphi}}}{k+1-2\sin ^{2}\hat{\varphi}} 
\]
Thesis follows by the fact that, using \eqref{hat}, reminding that  $k=q/p$
one can write: 
\[
\begin{split}
\frac{\sin (2\hat{\varphi})}{k+\cos (2\hat{\varphi})}&=\frac{p\sqrt{%
x^{2}\left( p^{2}+q^{2}-2p^{2}q^{2}x^{2}+2pq\sqrt{\left( 1-p^{2}x^{2}\right)
\left( 1-q^{2}x^{2}\right) }\right) }}{q-p^{2}qx^{2}+p\sqrt{\left(
1-p^{2}x^{2}\right) \left( 1-q^{2}x^{2}\right) }}\\
&=\frac{px}{\sqrt{
1-p^{2}x^{2}}}. 
\end{split}
\]
\end{proof} 
The well-known link \eqref{landen} between amplitudes has then be proved starting from the Lagrange's arithmetic-geometric transformation.

Lagrange appreciated Landen's work (see: \textit{Th\'{e}orie des
fonctions analytiques}, 1797, about the \textit{Residual Analysis,} 1758). On the subject of hyperbolic Landen theorem, in his letter to Condorcet\footnote{Jean-Antoine-Nicolas de Caritat, marquis De Condorcet (1743-1794), a French mathematician and economist.} of January 3$^{\rm rd}$, 1777, see \cite{Oplag}, tome XIV, page 41, we can read:
\begin{quote}
J'ai vu, dans le dernier volume des Transactions philosophiques, un th\'{e}or\`{e}me de M. Landen qui me parait bien singulier. Il r\'{e}duit la rectification des arcs elliptiques \`{a} celle des arcs hyperboliques. Je n'ai pas encore eu le temps d'examiner s'il n'y a pas de paralogisme dans la d\'{e}monstration.
\end{quote}
Nevertheless, given that the elliptic arc was as a primary element for computing even more difficult fluents, the true meaning of the new is the reverse, namely the reduction of one of the most difficult (hyperbola) integrations, to some more simple objects, like the elliptic arcs.
However Lagrange did not quote Landen's articles on the hyperbola in his Turin memoir \cite{Lag}, whilst Legendre did not cite Lagrange at all, but mentioned Landen, asking himself why Euler did not write anything about Landen whilst in his time (1751) he had written about Fagnano. He concludes that most likely Euler didn't know Landen papers. 
Formula \eqref{landen} was unknown to Landen; where Legendre took it, we cannot say. Jacobi, as we will see in a next paper, stayed on the subject, providing a geometric interpretation. Furthermore, not Gauss, but Lagrange, author of that paper only on the elliptic integrals, established the Arithmetic-Geometric Mean using it for building a transformation of the basic values $p$ and $q$ which. Working on it one can advance the formula:
\[
\frac{2\sqrt{k}}{1+k}:=\hat{k} 
\]
for scaling modules; and that (\ref{landen}) for amplitudes, both due to Legendre\footnote{%
For being more precise the amplitudes scaling (\ref{landen})
is introduced and used by Legendre in the form: 
\[
\sin(2\varphi _{1}-\varphi )=k\sin \varphi 
\]
equivalent to the previous one.}, who published them in first volume
of his {\it Trait\`{e}} submitted to the Academy in 1825 and issued in 1827.

\section{Conclusions}

This paper's aim is the hyperbola rectification, 1742-1827, with all the relevant problems of analytical, historical, geometrical nature.
Our conclusions have been split up to give the due room separately to each of the founders of the theory, Maclaurin, Landen, Lagrange.
The Legendre contribution will be the object of a next specific treatment.

\subsection*{Maclaurin}
 
 In \textit{Fluxions}, n. 755, Maclaurin, moved by his interest in all Mathematical Physics and Calculus of his time, defined a research program concerning the classification of irrational fluents,  then followed by D'Alembert  too, \textit{Recherches sur le calcul integral}, through a purely analytical approach. On the contrary Maclaurin performed their integration by means of arcs of conics and often with the help of geometrical arguments.
 Among these problems the \textit{elastica} was absolutely crucial. Not being possible to integrate it, such a problem can be switched either in rectifying the lemniscate or in computing the hyperbolic excess, at which then he arrived from the elastica and strain analysis.
His Apollonius's tributary scheme for measuring a hyperbola's length is analytically explained and discussed. Finally, his procedure for evaluating the hyperbolic excess is provided, adding to it our modern treatment and comparing the results. 

\subsection*{Landen}

We try to make known his famous theorem on hyperbola rectification whose original, rather obscure proofs (1775, 1780) are far from easy. After detailed explanations going back to Maclaurin's previous fundamental analysis and so on, we add some geometrical interpretations to all the Landen processes by means of a continuous chain of geometry constructions inferred by his treatment. We show that he studied the same irrational fluent of Maclaurin, namely the hyperbolic excess, which for him did not stem from the elastica, but from the pendulum time equation.
His synthetical topic on the limit hyperbolic excess has been analytically confirmed by us through the elliptic functions which will become a standard much later. Finally,  Landen is proved to be completely irrelevant to the transformation bearing his name, and that his name applies only to the hyperbolic theorem.

\subsection*{Lagrange}

In the \textit{Nouvelle m\'{e}thode}, \cite{Lag} defines a 
differential identity stemming from the AGM, established by him in the same paper and mistakenly attributed to Gauss. Integrating his identity, we arrive at the well-known Legendre formula for a recursive computation of the first kind elliptic integral. Such a  transformation was completely \textit{unknown to Landen}, as one can understand from \cite{lan1}, \cite{lan2} and \cite{lan3}: it was envisioned briefly but not developed by Lagrange, who was ahead of his times, but not very interested in elliptic integrals. The transformation was published in 1827 by Legendre, who applied  it extensively throughout the first volume of his {\it Trait\'{e}
}. 

Giovanni Mingari Scarpello

via Negroli, 6 Milan, Italy

giovannimingari@yahoo.it

\hspace{0.3 cm}

Daniele Ritelli

Dipartimento di Statistica University of Bologna

viale Filopanti, 5 Bologna, Italy

daniele.ritelli@unibo.it

\hspace{0.3 cm}

Aldo Scimone

Via C. Nigra, 30 Palermo, Italy

aldo.scimone@libero.it

\end{document}